\documentclass{article}
\usepackage{amsmath}

\setcounter{MaxMatrixCols}{10}

\newtheorem{corollary}{Corollary}
\newtheorem{theorem}{Theorem}
\newtheorem{proposition}{Proposition}
\newtheorem{lemma}{Lemma}
\newtheorem{remark}{Remark}
\numberwithin{equation}{section}
\numberwithin{theorem}{section}
\numberwithin{lemma}{section}
\numberwithin{proposition}{section}
\numberwithin{remark}{section}
\newenvironment{proof}[1][Proof]{\noindent\textbf{#1.} }{\ \rule{0.5em}{0.5em}}
\input{tcilatex}
\input{tcilatex}

\begin{document}

\title{Dynamics of ellipses inscribed in quadrilaterals}
\author{Alan Horwitz \\
Professor Emeritus of Mathematics\\
Penn State Brandywine\\
25 Yearsley Mill Rd.\\
Media, PA 19063\\
alh4@psu.edu}
\date{5/5/15}
\maketitle

\begin{abstract}
Let $Q$\ be a convex quadrilateral in the $xy$ plane, let $\limfunc{int}(Q)$
denote the interior of $Q$, and let $\partial (Q)$ denote the boundary of $Q$%
. Let $D_{1}$ and $D_{2}$ denote the diagonals of $Q$\ and let $P$ denote
their point of intersection. Let $P_{0}=(x_{0},y_{0})$ be a point in $\bar{Q}%
=\limfunc{int}(Q)\cup \partial \left( Q\right) $. We prove the following:

(i) If $P_{0}\in \limfunc{int}(Q),P_{0}\notin D_{1}\cup D_{2}$, then there
are exactly two ellipses inscribed in $Q$\ which pass through $P_{0}$.

(ii) If $P_{0}\in \limfunc{int}(Q)$ and $P_{0}\in D_{1}\cup D_{2}$, but $%
P_{0}\neq P$, then there is exactly one ellipse inscribed in $Q$\ which
passes through $P_{0}$.

(iii) There is no ellipse inscribed in $Q$\ which passes through $P$.

(iv) If $P_{0}\in \partial (Q)$, but $P_{0}$ is not one of the vertices of $%
Q $, then there is exactly one ellipse inscribed in $Q$\ which passes
through $P_{0}$(and is thus tangent to $Q$\ at one of its sides).
\end{abstract}

{\LARGE Introduction}

\qquad Suppose that we are given a point, $P_{0}$, in the interior of a
convex quadrilateral, $Q$, in the $xy$ plane. Is there an ellipse, $E$,
inscribed in $Q$ which also passes through $P_{0}$ ? If yes, how many such
ellipses ? By inscribed in $Q$ we mean that $E$ lies in $Q$ and is tangent
to each side of $Q$. Looked at in a dynamic sense: Imagine a particle
constrained to travel along the path of an ellipse inscribed in a convex
quadrilateral, $Q$. Thus the particle bounces off each side of $Q$\ along
its path. Of course there are infinitely many such paths. Can we also
specify a point in $Q$\ that the particle must pass through ? If yes, is
such a path then unique ? We show below(Theorem \ref{T1}) that the path is
unique when $P_{0}$ lies on one of the diagonals of $Q$(but does not equal
their intersection point), while there are two such paths if $P_{0}$ does
not lie on one of the diagonals of $Q$. Finally, if $P_{0}$ equals the
intersection point of the diagonals of $Q$, then no ellipse inscribed in $Q$
passes through $P_{0}$. We also prove that there is a unique ellipse
inscribed in $Q$ which is tangent a given point on the boundary of $Q$,
assuming, of course, that that point is not one of the vertices of $Q$.
Using affine invariance, it suffices to prove Theorem \ref{T1} when $Q$\ is
the convex quadrilateral with vertices $(0,0),(0,1),(1,0)$, and $(s,t)$,
where $s>0,t>0,s+t>1,s\neq 1\neq t$. The proof depends heavily on the
general equation of an ellipse inscribed in $Q$ given in Proposition \ref{P1}
below. It would be interesting to try to extend some of the results of
Theorem \ref{T1} to other families of simple closed convex curves inscribed
in a convex quadrilateral. It seems difficult, though, to come up with the
general equation of such families of curves as we do with ellipses. For a
paper somewhat similar to this one, but involving ellipses inscribed in
triangles, see \cite{H2}.

\section{Main Result}

\begin{theorem}
\label{T1}Let $Q$\ be a convex quadrilateral in the $xy$ plane, let $%
\limfunc{int}(Q)$ denote the interior of $Q$, and let $\partial (Q)$ denote
the boundary of $Q$. Let $D_{1}$ and $D_{2}$ denote the diagonals of $Q$\
and let $P$ denote their point of intersection. Let $P_{0}=(x_{0},y_{0})$ be
a point in $\bar{Q}=\limfunc{int}(Q)\cup \partial \left( Q\right) $.

(i) If $P_{0}\in \limfunc{int}(Q),P_{0}\notin D_{1}\cup D_{2}$, then there
are exactly two ellipses inscribed in $Q$\ which pass through $P_{0}$.

(ii) If $P_{0}\in \limfunc{int}(Q)$ and $P_{0}\in D_{1}\cup D_{2}$, but $%
P_{0}\neq P$, then there is exactly one ellipse inscribed in $Q$\ which
passes through $P_{0}$.

(iii) There is no ellipse inscribed in $Q$\ which passes through $P$.

(iv) If $P_{0}\in \partial (Q)$, but $P_{0}$ is not one of the vertices of $%
Q $, then there is exactly one ellipse inscribed in $Q$\ which passes
through $P_{0}$(and is thus tangent to $Q$\ at one of its sides).
\end{theorem}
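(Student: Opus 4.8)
The plan is to exploit affine invariance to reduce to the normalized quadrilateral $Q$ with vertices $(0,0),(0,1),(1,0),(s,t)$, and then to rely on the explicit one-parameter family of inscribed ellipses furnished by Proposition \ref{P1}. Since affine maps send ellipses to ellipses, tangency to tangency, diagonals to diagonals, interior to interior, and preserve incidence (hence the count of ellipses through a point), it suffices to verify each of (i)--(iv) for this standard $Q$. I expect Proposition \ref{P1} to give a parametrization of all inscribed ellipses by a single parameter, say $u$ ranging over an open interval, so that the locus of ellipses becomes a concrete algebraic object I can analyze directly.

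Let me explain my intended strategy for the counting. For a fixed point $P_0=(x_0,y_0)$, the condition that the inscribed ellipse $E(u)$ passes through $P_0$ is obtained by substituting $(x_0,y_0)$ into the general equation from Proposition \ref{P1}. Because the coefficients of the conic depend on the parameter $u$, this substitution yields a single equation in $u$; I anticipate that after clearing denominators it becomes a quadratic (or reduces to one) in $u$. The whole theorem should then follow from a careful discriminant analysis of this quadratic together with a check that the relevant roots actually lie in the admissible parameter interval (so that $E(u)$ is genuinely an inscribed ellipse and not a degenerate or external conic). Specifically, I would show: two admissible roots in case (i), giving two ellipses; a repeated or boundary-situated root in case (ii), giving exactly one; no admissible root in case (iii), so the discriminant analysis should reveal that $P=P_0$ forces the equation to have no solution in the interval (I would guess the two formal roots coincide with the excluded endpoints, or the equation becomes inconsistent); and in case (iv) a degeneration where tangency at a boundary point pins down $u$ uniquely.

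First I would set up the substitution and record the quadratic $A(x_0,y_0)u^2+B(x_0,y_0)u+C(x_0,y_0)=0$ explicitly, identifying its discriminant $\Delta(x_0,y_0)$. Next I would characterize the sign of $\Delta$ geometrically: the key claim is that $\Delta>0$ precisely when $P_0$ lies off both diagonals, $\Delta=0$ precisely when $P_0$ lies on a diagonal, and the configuration at $P$ is where both the quadratic degenerates and no root is admissible. I would prove this by computing the diagonals $D_1,D_2$ and their intersection $P$ in the normalized coordinates and factoring $\Delta$; I expect $\Delta$ to factor as a product of two linear forms, one vanishing on $D_1$ and one on $D_2$, which would simultaneously deliver (i), (ii), and (iii). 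For (iv) I would treat the boundary separately, using the fact that an inscribed ellipse meets each side in exactly one (tangency) point, so prescribing a non-vertex boundary point forces tangency there and hence a unique $u$.

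The main obstacle I foresee is not the discriminant computation itself but the admissibility bookkeeping: I must verify that the real roots of the quadratic correspond to values of $u$ for which $E(u)$ is an actual inscribed ellipse (tangent to all four sides from inside), rather than a hyperbola, a degenerate conic, or an ellipse tangent to the extended lines but not the segments. In other words, the algebra may produce spurious roots that lie outside the admissible parameter range from Proposition \ref{P1}, and the delicate part of the argument is showing that, in each case, the count of \emph{admissible} roots matches the claimed count. I would handle this by pinning down the exact $u$-interval of inscribed ellipses and checking, via the geometry of $P_0$'s position (interior, on-diagonal, at $P$, or on the boundary), that the roots fall inside or outside this interval as required. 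Degenerate limiting conics at the endpoints of the $u$-interval would need separate attention to rule them out as genuine solutions.
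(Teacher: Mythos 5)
Your overall architecture matches the paper's: reduce by affine invariance to the normalized quadrilateral, use the one-parameter family of inscribed ellipses from Proposition \ref{P1}, substitute $P_{0}$ to get a quadratic $p_{x_{0},y_{0}}$ in the parameter, and count roots in the admissible interval $I$. But your central claim in the third paragraph --- that the discriminant $\Delta(x_{0},y_{0})$ factors into two linear forms vanishing on the diagonals, with $\Delta>0$ off the diagonals and $\Delta=0$ on them --- is false, and it is the step that was supposed to deliver (i), (ii), (iii). In the normalized coordinates one finds $\Delta=-64(s-1)^{2}\,x\,y\,{\large (}t(x-1)-(s-1)y{\large )}{\large (}(t-1)x-s(y-1){\large )}$, a product of the four linear forms vanishing on the four \emph{sides} of $Q$, not the diagonals; consequently $\Delta>0$ at every interior point, including points on the diagonals and at $P$ itself. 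The diagonals enter the problem in an entirely different way: the values of the quadratic at the two endpoints of $I$ are perfect squares, $p_{x,y}(\tfrac12)=(x+y-1)^{2}(s-1)^{2}$ and $p_{x,y}(\tfrac{s}{2})=(sy-tx)^{2}(s-1)^{2}$, which vanish precisely when $P_{0}$ lies on the corresponding diagonal. So for every interior point the quadratic has two distinct real roots in the closed interval $\bar I$; what distinguishes (i), (ii), (iii) is how many of those roots fall in the \emph{open} interval $I$ (both, one, or neither, according as $P_{0}$ lies on neither, one, or both diagonals). You half-guessed this for (iii) in your second paragraph, but the discriminant dichotomy you then commit to would misclassify case (ii) entirely.

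Two further points your "admissibility bookkeeping" would have to supply. First, to conclude that a point on exactly one diagonal yields exactly one root in $I$ (rather than zero), you need the vertex $h_{0}$ of the quadratic to lie strictly inside $I$ together with $p_{x,y}(h_{0})<0$; the paper proves $h_{0}\in I$ by a connectedness argument over $\limfunc{int}(Q)$ and proves $p_{x,y}(h_{0})<0$ using the positivity of the leading coefficient $f(x,y)$ (Lemma \ref{L3}). Without $h_{0}\in I$ the quadratic could be monotone on $I$ and have no interior root even though it vanishes at an endpoint. Second, for (iv) your appeal to "one tangency point per side forces a unique $u$" assumes the map from the parameter to the tangency point on a given side is injective; the paper instead observes that on the boundary the discriminant vanishes (this is exactly where the factorization above is zero), so $p_{x_{0},y_{0}}$ has a double root at $h_{0}$, and the endpoint values are strictly positive (a non-vertex boundary point is off both diagonals), forcing $h_{0}\in I$ and hence exactly one ellipse. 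With the discriminant claim corrected to the side-lines and these two admissibility steps filled in, your plan becomes the paper's proof.
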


\begin{remark}
Instead of just looking at the class of ellipses inscribed in convex
quadrilaterals, $Q$, one might see whether the results of Theorem \ref{T1}
still hold for other families of simple closed convex curves inscribed in $Q$%
. For example, one could start with $x^{n}+y^{n}=1$ and apply all
nonsingular affine transformations to generate such a family. It seems clear
geometrically(we do not have a rigorous proof) that there is no simple
closed convex curve inscribed in $Q$\ which passes through the intersection
point of the diagonals of $Q$. So Theorem \ref{T1}(iii) would still hold for
any family of simple closed convex curves inscribed in $Q$. However, the
other parts of Theorem \ref{T1} would not necessarily hold since they depend
on that particular family of simple closed convex curves.
\end{remark}

By Theorem \ref{T1} we have the following:

\begin{corollary}
If two ellipses inscribed in a convex quadrilateral intersect at a point,
then that point of intersection cannot lie on either diagonal of the
quadrilateral.
\end{corollary}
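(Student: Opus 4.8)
The plan is to derive the corollary directly from parts (ii), (iii), and (iv) of Theorem \ref{T1}, treating it essentially as a contrapositive. Let $E_{1}$ and $E_{2}$ be two distinct ellipses inscribed in $Q$, and suppose they meet at a common point $P_{0}$. I would argue toward a contradiction, assuming that $P_{0}$ lies on one of the diagonals, $P_{0}\in D_{1}\cup D_{2}$, and then show this is impossible by locating $P_{0}$ among the mutually exclusive cases covered by the theorem.

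First I would rule out the possibility that $P_{0}\in \partial (Q)$. A point on a diagonal that lies on $\partial (Q)$ must be one of the two endpoints of that diagonal, that is, a vertex of $Q$. But no inscribed ellipse can pass through a vertex $V$: since such an ellipse is tangent to each side, it meets the line containing any one side at only its single contact point, so if it passed through $V=\ell _{1}\cap \ell _{2}$ then $V$ would have to be the contact point on both of the adjacent sides $\ell _{1}$ and $\ell _{2}$, forcing the ellipse to possess two distinct tangent lines at the single point $V$, which is impossible. Hence neither $E_{1}$ nor $E_{2}$ passes through a vertex, so $P_{0}$ is not a vertex, and therefore $P_{0}\in \limfunc{int}(Q)$.

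With $P_{0}$ now known to be an interior point lying on a diagonal, the remaining cases are exactly those of Theorem \ref{T1}(ii) and (iii). If $P_{0}=P$, the intersection of the diagonals, then part (iii) asserts that no inscribed ellipse passes through $P_{0}$, contradicting the existence of $E_{1}$. If instead $P_{0}\neq P$, then part (ii) guarantees that \emph{exactly one} inscribed ellipse passes through $P_{0}$, contradicting the assumption that the two distinct ellipses $E_{1}$ and $E_{2}$ both pass through it. In either case we reach a contradiction, so $P_{0}\notin D_{1}\cup D_{2}$, as claimed.

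I expect the only genuine subtlety to be the boundary/vertex step rather than the core of the argument, since once $P_{0}$ is seen to be interior the uniqueness statements (ii) and (iii) close the proof at once. It is also worth recording explicitly that ``two ellipses'' must be read as two \emph{distinct} inscribed ellipses, for otherwise the statement would be vacuous; it is precisely the distinctness of $E_{1}$ and $E_{2}$ that makes the appeal to the uniqueness in part (ii) effective.
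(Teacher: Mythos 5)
Your proof is correct and takes essentially the same route as the paper, which offers no argument beyond the remark that the corollary follows from Theorem \ref{T1}: the interior cases are dispatched by parts (ii) and (iii) exactly as you do. Your explicit handling of the boundary case --- observing that a diagonal meets $\partial(Q)$ only at vertices and that an inscribed ellipse cannot pass through a vertex since it would then need two distinct tangent lines there --- supplies a detail the paper leaves implicit, and it is sound.
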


\section{Preliminary Results}

A problem, often referred to in the literature as Newton's problem, was to
determine the locus of centers of ellipses inscribed in a convex
quadrilateral, $Q$, in the $xy$ plane. Chakerian(\cite{C}) gives a partial
solution of Newton's problem using orthogonal projection, which is the
solution actually given by Newton.

\begin{theorem}
(Newton)\label{Newton}Let $M_{1}$ and $M_{2}$ be the midpoints of the
diagonals of $Q$. If $E$ is an ellipse inscribed in $Q$, then the center of $%
E$ must lie on the open line segment, $Z$, connecting $M_{1}$ and $M_{2}$.
\end{theorem}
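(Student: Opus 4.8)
The plan is to reduce to the case of a circle by affine invariance and then use a signed-area characterization of the line through the midpoints of the diagonals. Since nonsingular affine maps preserve ellipses, ratios along lines (hence midpoints, and the betweenness that defines the open segment $Z$), and centers of ellipses, I would first apply an affine map $T$ carrying the inscribed ellipse $E$ to a circle. Then $T(Q)$ is a convex quadrilateral admitting an inscribed circle, $T(E)$ is that circle with center the image of the center of $E$, and $M_1,M_2$ map to the midpoints of the diagonals of $T(Q)$. Thus it suffices to prove the theorem when $E$ is a circle, i.e.\ to show that the incenter of a tangential convex quadrilateral lies on the open segment joining the midpoints of its diagonals.

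Label the vertices of the (now tangential) quadrilateral $A,B,C,D$ in cyclic order, so the diagonals are $AC$ and $BD$. For a point $P$ let $[PXY]$ denote the signed area of triangle $PXY$, and set
\[ f(P) = [PAB] + [PCD] - [PBC] - [PDA]. \]
A short computation shows that each signed area $[PXY]$ is an affine function of $P$ (the quadratic terms in the coordinates of $P$ cancel), so $f$ is affine and its zero set is a line. I would then check that $f$ vanishes at both midpoints: writing $M_1 = \tfrac12(A+C)$ and using affineness, $f(M_1) = \tfrac12\big(f(A)+f(C)\big)$, and the degenerate triangles together with the cyclic identities $[CAB]=[ABC]$ and $[CDA]=[ACD]$ give $f(M_1)=0$; similarly $f(M_2)=0$. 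Hence (assuming $Q$ is not a parallelogram, so that $M_1 \neq M_2$) the line through $M_1$ and $M_2$ is exactly $\{f=0\}$, which is the area characterization of the Newton line.

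Next I would evaluate $f$ at the incenter $I$. Because $I$ is interior and the four triangles $IAB,\,IBC,\,ICD,\,IDA$ are positively oriented, each signed area equals one half the incircle radius $r$ times the corresponding side length, so
\[ f(I) = \tfrac12 r\big(|AB| + |CD| - |BC| - |DA|\big). \]
By Pitot's theorem a tangential quadrilateral satisfies $|AB|+|CD| = |BC|+|DA|$, whence $f(I)=0$ and $I$ lies on the line $M_1M_2$; undoing $T$ places the center of $E$ on the line through the original midpoints.

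The remaining and most delicate point is the word \emph{open}: I must show the center lands strictly between $M_1$ and $M_2$ rather than merely on their line. Here the natural argument is that $M_1$ and $M_2$ are the centers of the two degenerate members of the one-parameter family of conics tangent to the four sides of $Q$ --- namely the two diagonals regarded as degenerate conics --- so a genuine, nondegenerate inscribed ellipse cannot be centered at either endpoint; equivalently, the incenter coincides with a midpoint of a diagonal only in the excluded parallelogram case. Making this endpoint-exclusion rigorous, and confirming that the inscribed ellipses sweep out precisely the interior of $Z$, is the step I expect to require the most care, since it depends on the global structure of the family of inscribed conics rather than on the single pointwise identity $f=0$.
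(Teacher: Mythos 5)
The paper does not actually prove this statement; it quotes Newton's theorem from the literature (Chakerian's orthogonal--projection treatment), so your attempt has to be judged on its own rather than against an in-paper argument. The part of your proof that places the center on the \emph{line} through $M_{1}$ and $M_{2}$ is correct and complete: the affine reduction to an incircle is legitimate (affine maps preserve ellipses, centers, midpoints and betweenness), each signed area $[PXY]$ is indeed affine in $P$, the computations $f(M_{1})=f(M_{2})=0$ are right, $f$ is non-constant for non-parallelograms because its linear part is a nonzero multiple of $\det\bigl((B+D)-(A+C),\,\cdot\,\bigr)$, and Pitot's theorem gives $f(I)=0$. This is Leon Anne's area characterization of the Newton line combined with the Pitot condition --- a clean and genuinely different route from the projection argument the paper alludes to.

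The genuine gap is the one you flag yourself: the theorem asserts the center lies on the \emph{open segment} $Z$, not merely on the line $M_{1}M_{2}$, and your argument never establishes this. Being interior to $Q$ and on the line does not suffice, because $M_{1}$ and $M_{2}$ are themselves interior points, so the line meets $\mathrm{int}(Q)$ in an open chord that strictly contains the closed segment $[M_{1},M_{2}]$; and the functional $f$ cannot detect the difference, since it vanishes on the entire line. Your proposed fix --- that $M_{1}$ and $M_{2}$ are the centers of the two degenerate members of the family of conics tangent to the four sides --- is the right idea, but even if made rigorous it only excludes the two endpoints, not the two rays beyond them. To close the gap you would need to set up the dual pencil $\lambda C_{1}^{*}+\mu C_{2}^{*}$ of conics tangent to the four lines, show that the center is an affine (hence monotone) function of the pencil parameter, and show that the parameter values yielding real nondegenerate ellipses form exactly the open interval between the two degenerate point-pairs $\{A,C\}$ and $\{B,D\}$. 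That is a substantive argument, not a remark. The omission matters here: the open-segment form is precisely what the paper consumes later, in the step of the proof of Proposition \ref{P1} asserting that every inscribed ellipse has center $\bigl(h_{1},L(h_{1})\bigr)$ with $h_{1}$ in the \emph{open} interval $I$.
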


In \cite{H1}, \cite{H3}, and \cite{H4} we proved several results about
ellipses inscribed in quadrilaterals. In particular, in \cite{H1} we proved
the following converse of Newton's Theorem.

\begin{theorem}
\label{T0}Let $Q$\ be a convex quadrilateral in the $xy$ plane and let $%
M_{1} $ and $M_{2}$ be the midpoints of the diagonals of $Q$. Let $Z$ be the
open line segment connecting $M_{1}$ and $M_{2}$. If $(h,k)\in Z$, then
there is a unique ellipse with center $(h,k)$ inscribed in $Q$.
\end{theorem}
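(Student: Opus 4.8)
The plan is to prove Theorem~\ref{T0} as the converse companion of Theorem~\ref{Newton}: Newton's theorem already confines the centers of inscribed ellipses to the open segment $Z$, so what remains is to show that the assignment sending an inscribed ellipse to its center is a \emph{bijection} from the family of inscribed ellipses onto $Z$. First I would exploit affine invariance. A nonsingular affine map carries ellipses to ellipses, preserves tangency and the property of being inscribed, sends the center of a conic to the center of its image, and preserves midpoints and line segments; hence it carries $M_1,M_2$ and $Z$ to the corresponding objects of the image quadrilateral. So without loss of generality $Q$ may be taken to be the standard quadrilateral with vertices $(0,0),(0,1),(1,0),(s,t)$ used throughout the paper.

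Next I would set up an explicit one-parameter family of the conics tangent to the four side-lines of $Q$ (the conics tangent to four fixed lines form a pencil, a one-parameter family), parametrized by a real parameter $\lambda$ running over an open interval $I$. For each $\lambda$ I would write the conic explicitly, inspect its quadratic part to decide when it is an ellipse, and solve the linear ``gradient $=0$'' system to obtain the center as an explicit rational map $\Phi(\lambda)=(h(\lambda),k(\lambda))$. By Theorem~\ref{Newton}, for every $\lambda$ producing a genuine inscribed ellipse the point $\Phi(\lambda)$ already lies on $Z$; thus $\Phi$ maps the admissible subinterval of $I$ into $Z$, and the whole theorem reduces to showing this map is a continuous bijection onto the \emph{open} segment.

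For uniqueness I would prove that $\Phi$ is injective: after introducing an affine coordinate $u$ measuring position along the Newton line through $M_1$ and $M_2$, I would show that $u\circ\Phi$ is a strictly monotonic function of $\lambda$ (equivalently, that the center moves along $Z$ as a degree-one, Möbius-type function of the pencil parameter), so no two distinct inscribed ellipses can share a center. For existence I would establish continuity of $\Phi$ and analyze the two boundary limits: as $\lambda$ tends to the ends of the admissible interval the ellipse collapses onto a diagonal of $Q$, the limiting degenerate member being a pair of opposite vertices whose midpoint is the corresponding $M_1$ or $M_2$. Hence $\Phi(\lambda)\to M_1$ at one end and $\Phi(\lambda)\to M_2$ at the other, so the intermediate value theorem applied to $u\circ\Phi$ forces the image to cover all of the open interval $(M_1,M_2)=Z$. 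Monotonicity together with this endpoint analysis then shows $\Phi$ is a continuous bijection from the admissible interval onto $Z$, which is exactly the statement that each $(h,k)\in Z$ is the center of one and only one inscribed ellipse.

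The main obstacle I anticipate is the injectivity/monotonicity step together with the correct identification of the boundary limits. Verifying strict monotonicity of $u\circ\Phi$ reduces to checking that an explicit rational derivative keeps a constant sign on $I$, and although this is ``only'' a computation, the bookkeeping in the parameters $s,t$ is delicate; one must also confirm that the admissible range of $\lambda$ (those values for which the tangent conic is a real ellipse lying inside $Q$, as opposed to a hyperbola or an externally tangent conic) is precisely the open interval whose endpoint degenerations yield $M_1$ and $M_2$. Handling these degenerate limits rigorously, rather than merely formally, is where the argument will need the most care.
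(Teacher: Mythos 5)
First, a point of comparison that matters here: the paper does not actually prove Theorem \ref{T0}. It is stated as a known result imported from \cite{H1} (``in \cite{H1} we proved the following converse of Newton's Theorem''), and in the present paper it functions as an input to Proposition \ref{P1}, where the explicit family (\ref{1}), parametrized by the center $h$, is shown to exhaust all inscribed ellipses precisely \emph{because} Theorem \ref{T0} guarantees a unique inscribed ellipse per center. So there is no in-paper argument to measure your proposal against; what you have written is a plan for reproving the cited result. Your plan --- affine reduction, the one-parameter dual pencil of conics tangent to the four side-lines, the center map $\Phi(\lambda)$, injectivity via monotonicity along the Newton line, and surjectivity onto the open segment via the degenerate limits at $M_1$ and $M_2$ --- is the natural and classically sound route, and if carried out it would in fact deliver the content of Proposition \ref{P1} as a by-product rather than needing Theorem \ref{T0} as an external citation.

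That said, as it stands the proposal has genuine unfilled gaps, and they sit exactly where the theorem's content lives. (1) You assert that the center moves as a ``degree-one, M\"obius-type'' function of the pencil parameter, but for a dual pencil $\lambda C_1^{*}+\mu C_2^{*}$ the point-conic is recovered from the adjugate, whose entries are \emph{quadratic} in $(\lambda:\mu)$; the center coordinates are therefore a priori degree-two rational functions of $\lambda$, and a degree-two map of a line to a line is generically two-to-one. Injectivity on the admissible interval is thus not automatic from the algebraic form and requires the explicit sign check you defer; this is the crux of uniqueness, not bookkeeping. (2) The identification of the admissible parameter range --- the $\lambda$ for which the tangent conic is a real ellipse inscribed in $Q$ (tangent to the four closed sides, not merely to the four lines, and contained in $Q$) --- and the verification that its two endpoints are the degenerate members $\{A,C\}$ and $\{B,D\}$ with ``centers'' $M_1$ and $M_2$ (and not the third degenerate member of the pencil, the pair of external diagonal points with midpoint on the Newton--Gauss line but off the segment $Z$) are asserted rather than proved. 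Until those two steps are executed, you have a credible program, not a proof; the paper itself sidesteps all of this by citing \cite{H1} and instead verifying the resulting explicit equation (\ref{1}) directly.
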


\begin{remark}
\label{R1}Theorem \ref{T0} gives a one--one map from the interval $I$ to
ellipses inscribed in $Q$.
\end{remark}

We use Theorem \ref{T0} in this paper to derive the general equation of an
ellipse inscribed in $Q$(see Proposition \ref{P1} ).\ We state the following
known result without proof. The first inequality insures that the conic is
an ellipse, while the second insures that the ellipse is non--trivial.

\begin{lemma}
\label{L1}The equation $Ax^{2}+By^{2}+2Cxy+Dx+Ey+F=0$, with $A,B>0$, is the
equation of an ellipse if and only if $AB-C^{2}>0$ and $%
AE^{2}+BD^{2}+4FC^{2}-2CDE-4ABF>0$.
\end{lemma}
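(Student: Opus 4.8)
The plan is to classify the conic $Ax^{2}+By^{2}+2Cxy+Dx+Ey+F=0$ by reducing it to standard form under a rigid motion of the plane, and to re-express the two stated inequalities in terms of the classical invariants of the conic. Associate to the equation the symmetric matrices
\[
M_{0}=\begin{pmatrix} A & C \\ C & B \end{pmatrix},\qquad
M=\begin{pmatrix} A & C & D/2 \\ C & B & E/2 \\ D/2 & E/2 & F \end{pmatrix},
\]
so that the quadratic part is governed by $M_{0}$ and the full conic by $M$. The quantity $\delta =\det M_{0}=AB-C^{2}$ is invariant under rotation, and $\Delta =\det M$ is invariant under every rigid motion; these two invariants will carry the whole argument.

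First I would analyze the quadratic part. Since $A,B>0$, the trace of $M_{0}$ is positive, so its eigenvalues $\lambda _{1},\lambda _{2}$ satisfy $\lambda _{1}+\lambda _{2}>0$ and $\lambda _{1}\lambda _{2}=\delta $. The level curves of the quadratic form are ellipses precisely when both eigenvalues share a sign; given the positive trace, this forces both to be positive, which happens if and only if $\delta =AB-C^{2}>0$. This identifies the first inequality as exactly the condition that the conic be of elliptic type, as opposed to parabolic or hyperbolic.

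Next, assuming $\delta >0$, I would diagonalize $M_{0}$ by a rotation and then translate to the center of the conic (which exists because $\delta \neq 0$), completing the square to remove the linear terms. The equation becomes $\lambda _{1}u^{2}+\lambda _{2}v^{2}=G$ for a constant $G$, whose associated $3\times 3$ matrix is $\mathrm{diag}(\lambda _{1},\lambda _{2},-G)$. Comparing determinants and using the invariance of $\Delta $ under rigid motions yields $\Delta =-\lambda _{1}\lambda _{2}G=-\delta \,G$. A direct expansion of $\det M$ then gives $-4\Delta =AE^{2}+BD^{2}+4FC^{2}-2CDE-4ABF$, so the second stated expression is precisely $-4\Delta $.

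Finally I would assemble the classification. With $\lambda _{1},\lambda _{2}>0$, the reduced equation $\lambda _{1}u^{2}+\lambda _{2}v^{2}=G$ describes a genuine (real, non-degenerate) ellipse if and only if $G>0$; when $G=0$ it collapses to a single point, and when $G<0$ it has no real points. Since $\delta >0$, the relation $\Delta =-\delta G$ shows that $G>0$ is equivalent to $\Delta <0$, that is, to $-4\Delta >0$, which is the second inequality. The main obstacle here is bookkeeping rather than conceptual: one must verify the algebraic identity $-4\Delta =AE^{2}+BD^{2}+4FC^{2}-2CDE-4ABF$ and track the determinant invariant carefully through both the rotation and the translation, while keeping the three cases (real ellipse, single point, empty set) distinct so that both inequalities are seen to be genuinely necessary and sufficient.
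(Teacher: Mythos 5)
The paper states this lemma explicitly \emph{without} proof (``We state the following known result without proof''), so there is no argument in the text to compare yours against; what you have written supplies the standard proof that the author is implicitly invoking. Your route --- introducing the $2\times 2$ and $3\times 3$ symmetric matrices $M_{0}$ and $M$, using invariance of $\delta=\det M_{0}$ and $\Delta=\det M$ under rigid motions, reducing to $\lambda_{1}u^{2}+\lambda_{2}v^{2}=G$ with $\Delta=-\delta G$, and then reading off the three cases $G>0$ (real ellipse), $G=0$ (point), $G<0$ (empty) --- is correct and is exactly the classical invariant classification of conics. The pivotal computation checks out: expanding $\det M$ gives $\Delta=ABF-\tfrac{1}{4}AE^{2}-C^{2}F+\tfrac{1}{2}CDE-\tfrac{1}{4}BD^{2}$, so indeed $-4\Delta=AE^{2}+BD^{2}+4FC^{2}-2CDE-4ABF$, and since $A,B>0$ forces $\operatorname{tr}M_{0}>0$, the condition $\delta>0$ is equivalent to both eigenvalues being positive, while $-4\Delta>0$ is equivalent to $G>0$. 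The one point worth making fully explicit in a written-up version is the ``only if'' half when $\delta\leq 0$: you should state (or cite) that an equation with $\delta<0$ defines a hyperbola or a pair of intersecting lines and one with $\delta=0$ defines a parabola, parallel or coincident lines, a single line, or the empty set, none of which is an ellipse, so that the first inequality is genuinely necessary and not merely sufficient for elliptic type. With that sentence added, your proof is complete and fills a gap the paper deliberately leaves open.
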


We prove Theorem \ref{T1} first when no two sides of $Q$ are parallel and
then when $Q$ is a trapezoid. We do not give all of the details of the proof
when $Q$ is a parallelogram.

\section{No Two Sides Parallel}

Assume first that $Q$ does not have two parallel sides. It suffices, by
affine invariance, to prove Theorem \ref{T1} when $Q$\ is the convex
quadrilateral with vertices $(0,0),(0,1),(1,0)$, and $(s,t)\in G$, where 
\begin{equation*}
G=\left\{ (s,t):s>0,t>0,s+t>1,s\neq 1\neq t\right\} \text{.}
\end{equation*}

Thus we assume throughout the rest of this section that $Q$\ has this form.
It is useful to state some facts about $Q$\ and related notation.

\textbullet\ The sides of $Q$ are given by $S_{1}=\overline{(0,0)\ (1,0)}%
,S_{2}=\overline{(0,0)\ (0,1)},$

$S_{3}=\overline{(s,t)\ (1,0)}$, and $S_{4}=\overline{(0,1)\ (s,t)}$, and
the corresponding lines which make up $\partial {\large (}Q{\large )}$ are
given by $L_{1}$: $y=0,L_{2}$: $x=0,L_{3}$: $y=\dfrac{t}{s-1}(x-1)$, and $%
L_{4}$: $y=1+\dfrac{t-1}{s}x$.

\textbullet\ The diagonals of $Q$\ are $y=\dfrac{t}{s}x$ and $y=1-x$ and
they intersect at $\left( \dfrac{s}{s+t},\dfrac{t}{s+t}\right) $.

\textbullet\ The midpoints of the diagonals of $Q$\ are the points 

$M_{1}=\left( \dfrac{1}{2},\dfrac{1}{2}\right) $ and $M_{2}=\left( \dfrac{1}{%
2}s,\dfrac{1}{2}t\right) $.

\textbullet\ The line through the midpoints of the diagonals of $Q$\ has
equation $y=L(x)$, where 
\begin{equation*}
L(x)=\dfrac{1}{2}\dfrac{s-t+2x(t-1)}{s-1}\text{.}
\end{equation*}

If $s>1$, then

\textbullet\ $\partial \left( Q\right) =\left\{ (x,y):0\leq x\leq
1,y=0\right\} \cup \left\{ (x,y):1\leq x\leq s,y=\dfrac{t}{s-1}(x-1)\right\}
\cup $

$\left\{ (x,y):0\leq x\leq s,y=1+\dfrac{t-1}{s}x\right\} \cup \left\{
(x,y):x=0,0\leq y\leq 1\right\} $.

\textbullet\ $\limfunc{int}(Q)=\left\{ (x,y):0<x\leq 1,0<y<1+\dfrac{t-1}{s}%
x\right\} \cup $

$\left\{ (x,y):1\leq x<s,\dfrac{t}{s-1}(x-1)<y<1+\dfrac{t-1}{s}x\right\} $.

If $s<1$, then

\textbullet\ $\partial \left( Q\right) =\left\{ (x,y):0\leq x\leq
s,y=0\right\} \cup \left\{ (x,y):s\leq x\leq 1,y=\dfrac{t}{s-1}(x-1)\right\}
\cup $

$\left\{ (x,y):0\leq x\leq s,y=1+\dfrac{t-1}{s}x\right\} \cup \left\{
(x,y):x=0,0\leq y\leq 1\right\} $.

\textbullet\ $\limfunc{int}(Q)=\left\{ (x,y):0<x\leq s,0<y<1+\dfrac{t-1}{s}%
x\right\} \cup $

$\left\{ (x,y):s\leq x<1,0<y<\dfrac{t}{s-1}(x-1)\right\} $

\textbullet\ Let $I=\left( \dfrac{1}{2},\dfrac{1}{2}s\right) $ if $s>1$ and $%
I=\left( \dfrac{1}{2}s,\dfrac{1}{2}\right) $ if $s<1$. \ Then any point on
the open line segment connecting $M_{1}$ and $M_{2}$ has the form ${\large (}%
h,L(h){\large )},h\in I$.

\qquad Before giving our first main result, we need the following simple
lemma from \cite{H1}:

\begin{lemma}
\label{L2}If $(s,t)\in G$, then $s+2h(t-1)>0$ for all $h\in I$.
\end{lemma}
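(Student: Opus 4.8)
The plan is to observe that the quantity in question, $f(h)=s+2h(t-1)$, is affine (degree one) in $h$, so on any interval its extreme values occur at the endpoints; hence it suffices to check the sign of $f$ at the two endpoints of $I$. A small but useful point is that, whether $s>1$ or $s<1$, the endpoints of $I$ are the same two numbers, namely $h=\tfrac12$ and $h=\tfrac12 s$; the definition of $I$ merely reorders them so that $I$ is written as a genuine interval. So there is really only one computation to carry out in both cases, and the two cases $s>1$ and $s<1$ need not be treated separately.

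First I would evaluate $f$ at $h=\tfrac12$, obtaining $f(\tfrac12)=s+(t-1)=s+t-1$, which is positive because $(s,t)\in G$ forces $s+t>1$. Next I would evaluate $f$ at $h=\tfrac12 s$, obtaining $f(\tfrac12 s)=s+s(t-1)=st$, which is positive because $s>0$ and $t>0$. Since $f$ is affine, for every $h$ lying between $\tfrac12$ and $\tfrac12 s$ we have $f(h)\ge\min\{s+t-1,\;st\}>0$; in particular $f(h)>0$ for all $h\in I$, which is the desired conclusion.

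There is essentially no hard step here: the only thing to keep straight is the affine (hence monotone) nature of $f$ in $h$, which reduces the claim to evaluating the two endpoint values, together with the bookkeeping that those two endpoint values coincide in the cases $s>1$ and $s<1$. One could alternatively argue by monotonicity—$f$ is increasing if $t>1$, decreasing if $t<1$, and constant (equal to $s>0$) if $t=1$—and then invoke only the relevant endpoint; but checking both endpoints is cleaner and avoids splitting into subcases on the sign of $t-1$.
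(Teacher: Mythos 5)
Your proof is correct and complete. Note that the paper itself gives no proof of this lemma (it is quoted as a "simple lemma from" reference [H1]), so there is nothing to compare against; your argument --- evaluating the affine function $s+2h(t-1)$ at the common endpoints $h=\tfrac12$ and $h=\tfrac12 s$ to get $s+t-1>0$ and $st>0$, then invoking linearity --- is a clean, self-contained verification that fills in exactly what the paper leaves implicit.
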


Equation (\ref{1}) below was derived using some formulas given in \cite{H1}
for the foci, center, and semi--major and minor axes of an ellipse inscribed
in $Q$. Once the coefficents of the ellipse equation were simplified, we
ended up with a simplified equation, which is given in Proposition \ref{P1}%
(i) below. It is, as expected, much easier to prove that the equation given
below is correct than to give the details of the actual derivation, which we
do not provide here.

\begin{proposition}
\label{P1}(i) $E$ is an ellipse inscribed in $Q$\ if and only if the general
equation of $E$ is given by%
\begin{gather}
4(s-1)^{2}{\large (}L(h){\large )}^{2}\left( x-h\right) ^{2}+4(s-1)^{2}h^{2}%
{\large (}y-L(h){\large )}^{2}  \notag \\
-4(s-1){\large (}2\left( t-1\right) h^{2}+\left( s-t+2\right) h-s{\large )}%
\left( x-h\right) {\large (}y-L(h){\large )}  \label{1} \\
=(2h-1){\large (}2(t-1)h+s{\large )}\left( s-2h\right) ,h\in I\text{.} 
\notag
\end{gather}%
(ii) If $E$ is an ellipse given in (i) for some $h\in I$, then $E$ is
tangent to the four sides of $Q$\ at the points $\zeta _{1}=\left( \dfrac{%
s-2h}{2(t-1)h+s-t},0\right) \in S_{1}$,

$\zeta _{2}=\left( 0,\dfrac{1}{2}\dfrac{s-2h}{\left( s-1\right) h}\right)
\in S_{2}$,

$\zeta _{3}=\left( \dfrac{s+2h(t-1)}{t+s-2h},\dfrac{(2h-1)t^{2}}{\left(
s-1\right) \left( s+t-2h\right) }\right) \allowbreak \in S_{3}$, and

$\zeta _{4}=\left( \dfrac{(2h-1)s^{2}}{s(s+t-2)-2(t-1)h},\dfrac{{\large (}%
s+2h(t-1){\large )}(s-1)}{s(s+t-2)-2(t-1)h}\right) \in S_{4}$.
\end{proposition}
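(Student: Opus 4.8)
The plan is to verify Proposition~\ref{P1} by exploiting the one--parameter family of inscribed ellipses guaranteed by Theorem~\ref{T0}, rather than re-deriving the equation from scratch. By Theorem~\ref{T0} and the parametrization of the segment $Z$ noted above, every inscribed ellipse $E$ has a unique center of the form $(h,L(h))$ with $h\in I$, and conversely each such $h$ yields a unique inscribed ellipse. Thus it suffices to show two things: first, that for each $h\in I$ the conic defined by equation~(\ref{1}) is genuinely an ellipse with center $(h,L(h))$ that is inscribed in $Q$; and second, that this exhausts all inscribed ellipses. The second point is then immediate from the uniqueness clause of Theorem~\ref{T0}, since distinct values of $h$ give distinct centers and hence distinct ellipses, and every center on $Z$ is hit. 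So the real content is the forward direction for a fixed $h$.

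For the forward direction I would first read off the center: writing the left side of~(\ref{1}) as $Ax^{2}+By^{2}+2Cxy+Dx+Ey+F$, the quadratic form in $(x-h)$ and $(y-L(h))$ makes it manifest that $(h,L(h))$ is the center, so Newton's Theorem~\ref{Newton} is at least consistent. Next I would invoke Lemma~\ref{L1} to confirm the conic is a genuine (nondegenerate) ellipse. The coefficients here are $A=4(s-1)^{2}(L(h))^{2}$, $B=4(s-1)^{2}h^{2}$, and $2C=-4(s-1)(2(t-1)h^{2}+(s-t+2)h-s)$, so $A,B>0$ for $h\in I$ (using $h\neq0$ and $s\neq1$), and I would compute $AB-C^{2}$ and check it is positive --- this is the discriminant condition that forces the conic to be of elliptic type and should factor into a clean positive expression. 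The nondegeneracy inequality of Lemma~\ref{L1} amounts to checking that the constant on the right side of~(\ref{1}), namely $(2h-1)(2(t-1)h+s)(s-2h)$, combines with the quadratic part to give a strictly positive invariant; here Lemma~\ref{L2} (guaranteeing $s+2h(t-1)>0$) and the range $h\in I$ are exactly what keep the relevant factors from vanishing or changing sign.

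For part~(ii), the strategy is to verify tangency side by side. For each line $L_{i}$ I would substitute its equation into~(\ref{1}) to obtain a quadratic in the remaining variable; tangency is equivalent to that quadratic having a double root, i.e.\ discriminant zero, and the location of the double root gives the tangency point $\zeta_{i}$. For $L_{1}\colon y=0$ and $L_{2}\colon x=0$ the substitution is trivial and the resulting quadratics should be perfect squares, directly yielding the stated $\zeta_{1}$ and $\zeta_{2}$; for $L_{3}$ and $L_{4}$ one substitutes the affine expressions for $y$ and again checks the discriminant vanishes, reading off $\zeta_{3},\zeta_{4}$ from the double root. I would also confirm each $\zeta_{i}$ actually lies on the closed side $S_{i}$ (not merely its extension) using the coordinate bounds for $\partial(Q)$ recorded above, which is what upgrades ``tangent to the four lines'' to ``inscribed in $Q$.''

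The main obstacle I anticipate is purely computational: the discriminant calculations for $L_{3}$ and $L_{4}$ involve the full parametric coefficients together with $L(h)=\tfrac12\,\frac{s-t+2h(t-1)}{s-1}$, so the algebra is heavy and the perfect-square structure may be well hidden. The decisive simplification should come from clearing the factor $(s-1)$ throughout and recognizing that $A,B,C$ were constructed precisely so that each side's quadratic collapses; the fact that the right-hand side of~(\ref{1}) factors as a product of three linear expressions in $h$ strongly suggests that the four tangency conditions are algebraically forced, so I expect each discriminant to reduce to zero identically once the expression for $L(h)$ is inserted. Verifying this cleanly, rather than by brute expansion, is where care is needed; once tangency is confirmed and the points are checked to lie on the correct sides, the ellipse is inscribed and the proof is complete.
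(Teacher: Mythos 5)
Your proposal is correct and follows essentially the same route as the paper: establish that~(\ref{1}) defines a nondegenerate ellipse via Lemma~\ref{L1} (with Lemma~\ref{L2} controlling the signs), verify tangency to the four sides at the stated points $\zeta_{1},\dots,\zeta_{4}$, and then obtain the converse from the uniqueness clause of Theorem~\ref{T0} by matching centers $(h,L(h))$. The only difference is cosmetic: you verify tangency by substituting each line into~(\ref{1}) and checking that the resulting quadratic has a double root, whereas the paper checks that each $\zeta_{i}$ satisfies~(\ref{1}) and that the implicit derivative there equals the slope of $L_{i}$ (handling the vertical tangent at $\zeta_{2}$ separately); both are routine and equivalent.
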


\begin{proof}
First, suppose that $E$ is given by (\ref{1}). Then by simplifying one can
show that $E$ has the form $Ax^{2}+By^{2}+2Cxy+Dx+Ey+F=0$, where $%
A=4(s-1)^{2}{\large (}L(h){\large )}^{2}$, $B=4(s-1)^{2}h^{2}$, $C=-2(s-1)%
{\large (}2\left( t-1\right) h^{2}+\left( s-t+2\right) h-s{\large )}$, $%
D=-2\left( s-2h\right) {\large (}s-t+2h(t-1){\large )}$, $E=\allowbreak
-4h\left( s-1\right) \left( s-2h\right) $, and $F=\left( s-2h\right) ^{2}$.
Then $AB-C^{2}=16(s-1)^{4}h^{2}{\large (}L(h){\large )}^{2}-4(s-1)^{2}%
{\large (}2\left( t-1\right) h^{2}+\left( s-t+2\right) h-s{\large )}%
^{2}=\allowbreak 4\left( 2h-1\right) \left( s-1\right) ^{2}\left(
s-2h\right) {\large (}2(t-1)h+s{\large )}>0$ and $%
AE^{2}+BD^{2}+4FC^{2}-2CDE-4ABF=\allowbreak 16\left( 2h-1\right) ^{2}\left(
s-1\right) ^{2}\left( s-2h\right) ^{2}{\large (}s+2h(t-1){\large )}%
^{2}\allowbreak $ $>0$ by Lemma \ref{L2} and the fact that $h\in I$. Thus,
by Lemma \ref{L1}, (\ref{1}) defines the equation of an ellipse for any $%
h\in I$. Now let $F(x,y)$ equal the left hand side of (\ref{1}). Then $%
F\left( \dfrac{s-2h}{2(t-1)h+s-t},0\right) =\allowbreak F\left( 0,\dfrac{1}{2%
}\dfrac{s-2h}{\left( s-1\right) h}\right) =F\left( \dfrac{s+2h(t-1)}{t+s-2h},%
\dfrac{(2h-1)t^{2}}{\left( s-1\right) \left( s+t-2h\right) }\right)
=\allowbreak $

$F\left( \dfrac{(2h-1)s^{2}}{s(s+t-2)-2(t-1)h},\dfrac{{\large (}s+2h(t-1)%
{\large )}(s-1)}{s(s+t-2)-2(t-1)h}\right) =$

$\allowbreak (2h-1){\large (}2(t-1)h+s{\large )}\left( s-2h\right) $, which
implies that the four points $\zeta _{1}$ thru $\zeta _{4}$ lie on $E$. For
fixed $h$, differentiating both sides of the equation in (\ref{1}) with
respect to $x$ yields: 
\begin{gather*}
8(s-1)^{2}{\large (}L(h){\large )}^{2}\left( x-h\right) +8(s-1)^{2}h^{2}%
{\large (}y-L(h){\large )}\dfrac{dy}{dx} \\
-4(s-1){\large (}2\left( t-1\right) h^{2}+\left( s-t+2\right) h-s{\large )}%
\left( x-h\right) \dfrac{dy}{dx} \\
-4(s-1){\large (}2\left( t-1\right) h^{2}+\left( s-t+2\right) h-s{\large )(}%
y-L(h){\large )}=0\text{,}
\end{gather*}%
so 
\begin{eqnarray*}
\dfrac{dy}{dx} &=&D(x,y)={\large (}4(s-1){\large (}2\left( t-1\right) h^{2}+
\\
&&\left( s-t+2\right) h-s{\large )(}y-L(h){\large )}-8(s-1)^{2}{\large (}L(h)%
{\large )}^{2}\left( x-h\right) {\large )}/ \\
&&{\large (}8(s-1)^{2}h^{2}{\large (}y-L(h){\large )}-4(s-1){\large (}%
2\left( t-1\right) h^{2}+ \\
&&\left( s-t+2\right) h-s{\large )}\left( x-h\right) {\large )}\text{,}
\end{eqnarray*}%
which represents the slope of the ellipse. $D\left( \dfrac{s-2h}{s-2h-t+2ht}%
,0\right) =\allowbreak 0=$ slope of $L_{1}$, $D\left( \dfrac{s-2h+2ht}{t+s-2h%
},\dfrac{2t^{2}h-t^{2}}{\left( s-1\right) \left( t+s-2h\right) }\right)
=\allowbreak \dfrac{t}{s-1}=$ slope of $L_{2}$, and $D\left( \dfrac{%
2s^{2}h-s^{2}}{s^{2}-2s+ts+2h-2ht},\dfrac{\left( s-2h+2ht\right) (s-1)}{%
s^{2}-2s+ts+2h-2ht}\right) =\allowbreak \dfrac{t-1}{s}=$ slope of $L_{3}$.
When $x=0,y=\dfrac{s-2h}{2\left( s-1\right) h}$, the denominator of $D(x,y)$
equals $0$, but the numerator of $D(x,y)$ equals $\allowbreak 2\left(
2h-1\right) \left( s-2h\right) \dfrac{s+2h(t-1)}{h}\neq 0$ by Lemma \ref{L2}
and the fact that $h\in I$. Thus $L_{2}$ is the tangent line at $\zeta _{2}$%
. It follows easily that $\zeta _{1}$ thru $\zeta _{4}$ lie on the line
segments $S_{1}$ thru $S_{4}$, respectively. For any simple closed convex
curve, such as an ellipse, tangent to each side of $Q$ then implies that
that curve lies in $Q$. That proves that $E$ is inscribed in $Q$. Second,
suppose that $E$ is an ellipse inscribed in $Q$. By Theorem \ref{T0},\textbf{%
\ }$E$ has center ${\large (}h_{1},L(h_{1}){\large )}$ for some $h_{1}\in I$%
. We have just shown that (\ref{1}) represents a family of ellipses
inscribed in $Q$\ as $h$ varies over $I$, and each ellipse given by (\ref{1}%
) clearly has center ${\large (}h,L(h){\large )}$ for some $h\in I$. Let $%
\tilde{E}$ be the ellipse given by (\ref{1}) with $h=h_{1}$. Hence $\tilde{E}
$ also has center ${\large (}h_{1},L(h_{1}){\large )}$ and is inscribed in $%
Q $. By Theorem \ref{T0}(see Remark \ref{R1}), $\tilde{E}=E$ and the general
equation of $E$ must be given by (\ref{1}). That proves (i). We have also
just shown that if $E$ is given by (\ref{1}), then $E$ is tangent to the
four sides of $Q$\ at the four points $\zeta _{1}$ thru $\zeta _{4}$, which
proves (ii).
\end{proof}

\begin{lemma}
\label{L3}Let 
\begin{equation}
f(x,y)={\large (}(t-1)x-(s-1)y{\large )}^{2}+2(t-1)x+2(s-1)y+1\text{, }%
(s,t)\in G\text{.}  \label{f}
\end{equation}%
Then $f(x,y)>0$ for any $(x,y)\in \bar{Q}=\limfunc{int}(Q)\cup \partial
\left( Q\right) $.
\end{lemma}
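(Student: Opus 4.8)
The plan is to reduce the inequality to the boundary $\partial(Q)$ and then dispatch each side separately. The key starting observation is that $f$ has no critical points in the plane. Computing the gradient gives $\partial_x f = 2(t-1)\bigl((t-1)x-(s-1)y+1\bigr)$ and $\partial_y f = 2(s-1)\bigl(-(t-1)x+(s-1)y+1\bigr)$, and since $s\neq 1$ and $t\neq 1$, a simultaneous zero would force both $(t-1)x-(s-1)y+1=0$ and $-(t-1)x+(s-1)y+1=0$, whose sum is the contradiction $2=0$. Because $f$ is continuous and $\bar Q$ is compact, $f$ attains its minimum on $\bar Q$; as a minimum at an interior point would require the gradient to vanish there, the minimum must be attained on $\partial(Q)$. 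Hence it suffices to prove $f>0$ on each of the four sides $S_1,\dots,S_4$.

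Second, I would substitute each side's defining line equation into $f$ and simplify. The pleasant fact is that on every side the quadratic part $\bigl((t-1)x-(s-1)y\bigr)^2$ collapses and $f$ becomes a perfect square of a linear function: on $S_1$ (where $y=0$) one gets $f=\bigl((t-1)x+1\bigr)^2$; on $S_2$ (where $x=0$), $f=\bigl((s-1)y+1\bigr)^2$; on $S_3$ (using $(s-1)y=t(x-1)$), $f=(x+t-1)^2$; and on $S_4$ (using $(t-1)x=s(y-1)$), $f=(y+s-1)^2$. Each expression is automatically nonnegative, so the entire question reduces to whether the single root of each square can actually lie on the corresponding side.

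Third, I would exclude those roots using the defining inequalities of $G$. The root of the $S_1$ square is $x=1/(1-t)$, which is negative when $t>1$ and exceeds $1$ when $0<t<1$, so it never lies in the range $0\le x\le 1$ of $S_1$; the $S_2$ case is identical with $s$ in place of $t$. The root of the $S_3$ square is $x=1-t$, while on $S_3$ the $x$-coordinate runs between $1$ and $s$: here $1-t<1$ (since $t>0$) settles the case $s>1$, and $1-t<s$, equivalently $s+t>1$, settles the case $s<1$. Symmetrically, the root $y=1-s$ of the $S_4$ square is excluded using $s>0$ and $s+t>1$. Thus each perfect square is strictly positive on its side, giving $f>0$ on $\partial(Q)$ and hence, by the first step, on all of $\bar Q$.

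The main obstacle is this last step rather than the algebra: the perfect-square identities are routine to verify, but one must check, in each sign case for $s-1$ and $t-1$, that the relevant root falls strictly outside the corresponding edge. It is precisely here that the hypothesis $s+t>1$ (together with $s,t>0$ and $s\neq 1\neq t$) is indispensable, since without it a zero of $f$ could sit on $S_3$ or $S_4$ and the conclusion would fail.
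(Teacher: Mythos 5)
Your proof is correct, and it takes a cleaner, more uniform route than the paper's. The paper splits into cases: for $t>1$ (or $s>1$) it views $f$ as a quadratic in $s$ (or $t$), computes the discriminant $-16xy^{2}(t-1)$ (or $-16x^{2}y(s-1)$), concludes $f$ cannot vanish, and then invokes positivity at the single point $\left(\frac{s}{s+t},\frac{t}{s+t}\right)$; only in the remaining case $s<1,t<1$ does it use the argument you give, namely that the gradient equations force the contradiction $(t-1)x-(s-1)y=\pm 1$ simultaneously, so the minimum over $\bar{Q}$ sits on $\partial(Q)$, where $f$ collapses to the perfect squares $\bigl((t-1)x+1\bigr)^{2}$, $\bigl((s-1)y+1\bigr)^{2}$, $(x+t-1)^{2}$, $(y+s-1)^{2}$. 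You observe that the critical-point argument needs only $s\neq 1\neq t$ and hence works for every $(s,t)\in G$, which lets you dispense with the discriminant computation entirely; the price is that you must exclude the root of each square in both sign cases for $s-1$ and $t-1$ rather than only for $s,t<1$, and you do this correctly using exactly the hypotheses $s,t>0$ and $s+t>1$. Your version is arguably tighter: the paper's discriminant $-16xy^{2}(t-1)$ degenerates to $0$ on the edges $x=0$ and $y=0$, so strict positivity there still implicitly needs the perfect-square check that you carry out for all cases, whereas your single argument covers $\bar{Q}$ without that loose end. Your boundary identities and root locations all check out (e.g.\ on $S_{3}$, substituting $(s-1)y=t(x-1)$ does give $(x+t-1)^{2}$ with root $x=1-t$, excluded from $[\min(1,s),\max(1,s)]$ by $t>0$ and $s+t>1$).
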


\begin{proof}
While it is obvious that $f(x,y)>0$ in $\bar{Q}$ if both $s$ and $t$ are $>1$%
, we can do the following if one or both of those values is $>1$. First,
expressing $f$ as a quadratic in $s$ we have

$\allowbreak $%
\begin{eqnarray*}
f(x,y) &=&y^{2}s^{2}+\left( -2y^{2}+2y-2xty+2xy\right) s+ \\
&&x^{2}-2x^{2}\allowbreak t+2xty-2xy+x^{2}t^{2}+y^{2}-2x+2xt-2y+1\text{,}
\end{eqnarray*}%
which has discriminant $\left( -2y^{2}+2y-2xty+2xy\right)
^{2}-4y^{2}(x^{2}-2x^{2}\allowbreak
t+2xty-2xy+x^{2}t^{2}+y^{2}-2x+2xt-2y+1)=\allowbreak -16xy^{2}\left(
t-1\right) $. If $t>1$, then the discriminant is negative and thus $f$ has
no roots with $x$ and $y$ real. Since $f\left( \dfrac{s}{s+t},\dfrac{t}{s+t}%
\right) =\allowbreak 4st\dfrac{s+t-1}{\left( s+t\right) ^{2}}>0$, $f(x,y)>0$
in $\bar{Q}$. Similarly, expressing $f$ as a quadratic in $t$ we have 
\begin{eqnarray*}
f(x,y) &=&x^{2}t^{2}+\left( 2xy+2x-2xys-2x^{2}\right) t+ \\
&&x^{2}+1+2\allowbreak xys-2xy-2y^{2}s+y^{2}-2x+y^{2}s^{2}+2ys-2y\text{,}
\end{eqnarray*}

which has discriminant $\left( 2xy+2x-2xys-2x^{2}\right)
^{2}-4x^{2}(x^{2}+1+2\allowbreak
xys-2xy-2y^{2}s+y^{2}-2x+y^{2}s^{2}+2ys-2y)=\allowbreak -16x^{2}y\left(
s-1\right) $. Arguing as above, if $s>1$, then $f(x,y)>0$ in $\bar{Q}$. So
assume now that $s<1$ and $t<1$, and so $I=\left( \dfrac{1}{2}s,\dfrac{1}{2}%
\right) $. $\dfrac{\partial f(x,y)}{\partial x}=0$ and $\dfrac{\partial
f(x,y)}{\partial y}=0$ implies that $(t-1)x+1-(s-1)y=0$ and $%
(t-1)x-1-(s-1)y=0$, which in turn implies that $(t-1)x-(s-1)y=-1$ and $%
(t-1)x-(s-1)y=1$. Thus $f$ has no critical points in $\limfunc{int}(Q)$. To
check $f$ on $\partial \left( Q\right) $: 
\begin{eqnarray*}
f(x,0) &=&\allowbreak {\large (}(t-1)x+1{\large )}^{2},0\leq x\leq s\text{.}
\\
f\left( x,\dfrac{t}{s-1}(x-1)\right) &=&\allowbreak \left( x+t-1\right)
^{2},s\leq x\leq 1\text{.} \\
f\left( x,1+\dfrac{t-1}{s}x\right) &=&\allowbreak \dfrac{{\large (}%
s^{2}+(t-1)x{\large )}^{2}}{s^{2}},0\leq x\leq s\text{.} \\
f(0,y) &=&{\large (}(s-1)y+1{\large )}^{2},0\leq y\leq 1\text{.}
\end{eqnarray*}

$(t-1)x+1=0$ implies that $x=\dfrac{1}{1-t}>1>s$, and thus $(t-1)x+1\neq 0$
if $0\leq x\leq s$, which implies that $f(x,0)>0$.

If $s\leq x\leq 1$, then $x+t-1\geq s+t-1>0$, which implies that $f\left( x,%
\dfrac{t}{s-1}(x-1)\right) >0$.

If $0\leq x\leq s$, then $s^{2}+(t-1)x\geq s^{2}+(t-1)s=s(s+t-1)>0$, which
implies that $f\left( x,1+\dfrac{t-1}{s}x\right) >0$.

Finally, $(s-1)y+1=0$ implies that $y=\dfrac{1}{1-s}>1$, which implies that $%
f(0,y)>0$. Since $f$ has no critical points in $\limfunc{int}(Q)$ and $f$ is
positive on $\partial \left( Q\right) $, $f$ must positive in $\bar{Q}$.
\end{proof}

\begin{proof}
\textbf{(of Theorem \ref{T1} when no two sides are parallel): }For fixed $x$
and $y$, one can rewrite (\ref{1}) in the form $p_{x,y}(h)=0$, where 
\begin{eqnarray*}
p_{x,y}(h) &=&4(s-1)^{2}{\large (}L(h){\large )}^{2}\left( x-h\right)
^{2}+4(s-1)^{2}h^{2}{\large (}y-L(h){\large )}^{2} \\
&&-4(s-1){\large (}2\left( t-1\right) h^{2}+ \\
&&\left( s-t+2\right) h-s{\large )}\left( x-h\right) {\large (}y-L(h){\large %
)} \\
&&-(2h-1){\large (}2(t-1)h+s{\large )}\left( s-2h\right) \text{.}
\end{eqnarray*}%
$p_{x,y}$ depends on $s$ and $t$ as well as $x$ and $y$, but we suppress
that dependence in our notation. Let $f(x,y)$ be given by (\ref{f}). Some
simplification shows that $p_{x,y}$ is really the following quadratic
polynomial in $h$:%
\begin{gather}
p_{x,y}(h)=4f(x,y)h^{2}+  \label{p} \\
4\allowbreak {\large ((}\left( t-1\right) x+1{\large )(}\left( s-t\right) x-s%
{\large )}-\left( s-1\right) y{\large (}s+\left( s-t+2\right) x{\large ))}h+
\notag \\
{\large (}\left( s-t\right) x-s{\large )}^{2}+\allowbreak 4s\left(
s-1\right) xy\text{.}  \notag
\end{gather}
By Proposition \ref{P1}(i), the equation of any ellipse, $E$, inscribed in $%
Q $\ is given by $p_{x,y}(h)=0,h\in I$. Hence, for given $(x_{0},y_{0})$,
the number of times an ellipse $E$ with equation given by (\ref{1}) passes
through $(x_{0},y_{0})$ equals the number of distinct roots of $%
p_{x_{0},y_{0}}(h)=0$ in $I$. Evaluating $p_{x,y}$ at the endpoints of $I$
yields%
\begin{eqnarray}
p_{x,y}\left( \dfrac{1}{2}\right) &=&\left( x+y-1\right) ^{2}\left(
s-1\right) ^{2}\geq 0  \label{endpts} \\
p_{x,y}\left( \dfrac{s}{2}\right) &=&\left( sy-xt\right) ^{2}\allowbreak
\left( s-1\right) ^{2}\geq 0\text{.}  \notag
\end{eqnarray}%
By (\ref{p}), $p_{x,y}^{\prime }(h_{0})=0$, where 
\begin{equation*}
h_{0}=-\dfrac{1}{2}\dfrac{{\large (}\left( t-1\right) x+1{\large )(}\left(
s-t\right) x-s{\large )}-\left( s-1\right) y{\large (}s+\left( s-t+2\right) x%
{\large )}}{{\large (}(t-1)x+1{\large )}^{2}+\left( s-1\right) \allowbreak y%
{\large (}(s-1)y+2-2\left( t-1\right) x{\large )}}\text{.}
\end{equation*}

$h_{0}$ depends on $s,t,x$, and $y$, but we suppress that dependence in our
notation. After some simplification, it follows easily that 
\begin{equation}
p_{x,y}(h_{0})=\dfrac{4\left( s-1\right) ^{2}xy{\large (}\left( x-1\right)
t-(s-1)y{\large )(}\left( t-1\right) x-s(y-1){\large )}}{f(x,y)}\text{.}
\label{ph0}
\end{equation}%
We now assume throughout that $s>1$ and thus $I=\left( \dfrac{1}{2},\dfrac{1%
}{2}s\right) $. The case $s<1$ follows similarly and we omit the details.
Suppose that $(x,y)\in \limfunc{int}(Q)$. We shall prove that $%
p_{x,y}(h_{0})<0$. First, $y<1+\dfrac{t-1}{s}x,0<x<s$ implies that $%
sy-s<\left( t-1\right) x$, so $\left( t-1\right) x-s(y-1)>0$. Second, if $%
1\leq x<s$, then $\dfrac{t}{s-1}(x-1)<y$ since $(x,y)\in \limfunc{int}(Q)$,
while if $0<x<1$, then $\dfrac{t}{s-1}(x-1)<0<y$.

Thus $\left( x-1\right) t-(s-1)y<0,0<x<s$ and it follows that the numerator
of $p_{x,y}(h_{0})$ is negative, while the denominator of $p_{x,y}(h_{0})$
is positive by Lemma \ref{L3}. Summarizing:%
\begin{equation}
p_{x,y}^{\prime }(h_{0})=0\ \text{and }(x,y)\in \limfunc{int}(\text{$Q$}%
)\Rightarrow p_{x,y}(h_{0})<0\text{.}  \label{3}
\end{equation}%
We now prove that $\dfrac{1}{2}<h_{0}<\dfrac{s}{2}$. One could try to prove
this directly, but we found it easier to use the following approach.\ Let $%
S=\left\{ (x,y)\in \limfunc{int}(Q):h_{0}\in I\right\} $. We shall prove
that $S$ is nonempty and both open and closed in $\limfunc{int}(Q)$ in the
Euclidean norm. Since $\limfunc{int}(Q)$ is connected, that will prove that $%
S=\limfunc{int}(Q)$. To prove that $S$ is open, suppose that $(x,y)\in S$,
so that $p_{x,y}^{\prime }(h_{0})=0,\dfrac{1}{2}<h_{0}<\dfrac{s}{2}$.
Suppose also that $\left\{ (x_{k},y_{k})\right\} \in \limfunc{int}(Q$\ $)$
converges to $(x,y)$ in the Euclidean norm, and let $h_{0,k}$ be the root of 
$p_{x_{k},y_{k}}^{\prime }$. Clearly $\dfrac{1}{2}<h_{0,k}<\dfrac{s}{2}$ for
sufficiently large $k$, which implies that $(x_{k},y_{k})$ $\in S$ for
sufficiently large $k$ and thus $S$ is open. To prove that $S$ is closed,
suppose that $(x_{k},y_{k})\in S$ for all $k$ and that $\left\{
(x_{k},y_{k})\right\} $ converges to $(x,y)\in \limfunc{int}(Q)$ in the
Euclidean norm. Again, let $h_{0,k}$ be the root of $p_{x_{k},y_{k}}$.Let $%
\left\{ h_{0,k_{j}}\right\} $ be any convergent subsequence of $\left\{
h_{0,k}\right\} $ with $h_{0,k_{j}}\rightarrow h_{0}$. Since $%
p_{x_{k},y_{k}}^{\prime }(h_{0,k_{j}})=0$ and $\dfrac{1}{2}<h_{0,k_{j}}<%
\dfrac{s}{2}$, $p_{x,y}^{\prime }(h_{0})=0$ and $\dfrac{1}{2}\leq h_{0}\leq 
\dfrac{s}{2}$. If $h_{0}=\dfrac{1}{2}$ or $h_{0}=\dfrac{s}{2}$, then $%
p_{x,y}\left( \dfrac{1}{2}\right) <0$ or $p_{x,y}\left( \dfrac{s}{2}\right)
<0$ by (\ref{3}), which contradicts (\ref{endpts}). Thus $\dfrac{1}{2}<h_{0}<%
\dfrac{s}{2}$ and so $S$ is closed. Finally, to show that $S$ is nonempty,
let $x=\dfrac{s}{s+t}$ and $y=\dfrac{t}{s+t}$. Since the diagonals of $Q$\
intersect at $\left( \dfrac{s}{s+t},\dfrac{t}{s+t}\right) $, $(x,y)\in 
\limfunc{int}(Q)$. The corresponding $p_{x,y}$ is given by $%
p_{x,y}(h)=\allowbreak \dfrac{4st(s+t-1)}{\left( s+t\right) ^{2}}\left(
2h-1\right) \left( 2h-s\right) $, and $p_{x,y}^{\prime }(h)=\dfrac{8st(s+t-1)%
}{\left( s+t\right) ^{2}}(4h-s-1)$, which implies that $h_{0}=\dfrac{1}{4}s+%
\dfrac{1}{4}$. Since $h_{0}-\dfrac{1}{2}=\allowbreak \dfrac{1}{4}(s-1)>0$
and $h_{0}-\dfrac{s}{2}=-\dfrac{1}{4}(s-1)<0,\dfrac{1}{2}<h_{0}<\dfrac{s}{2}$%
. Hence $\left( \dfrac{s}{s+t},\dfrac{t}{s+t}\right) \in S$ and thus $S$ is
nonempty. Hence we have shown that $S=\limfunc{int}(Q)$, which implies that $%
h_{0}\in I$ for any $(x,y)\in \limfunc{int}(Q$$)$. Since $p_{x,y}(h_{0})<0$
and $h_{0}\in I$, by (\ref{endpts}) we have 
\begin{equation}
(x,y)\in \limfunc{int}(\text{$Q$})\Rightarrow p_{x,y}\ \text{has two
distinct roots in }\bar{I}=\left[ \dfrac{1}{2},\dfrac{1}{2}s\right] \text{.}
\label{proots}
\end{equation}%
To prove (i), let $P_{0}\in \limfunc{int}(Q),P_{0}\notin D_{1}\cup D_{2}$.
Then $x_{0}+y_{0}-1\neq 0\neq sy_{0}-tx_{0}$, which implies, by (\ref{endpts}%
), that $p_{x_{0},y_{0}}\left( \dfrac{1}{2}\right) >0$ and $%
p_{x_{0},y_{0}}\left( \dfrac{s}{2}\right) >0$. By (\ref{proots}), $%
p_{x_{0},y_{0}}$ has two distinct roots, $h_{1}$ and $h_{2}$, in $I$. Let $%
E_{j}$ be the ellipse with equation given by (\ref{1}) with $h=h_{j},j=1,2$.
By Proposition \ref{P1}(i), $E_{j}$ is inscribed in $Q$. Since $%
p_{x_{0},y_{0}}(h_{j})=0$, $E_{j}$ passes through $P_{0}$ since (\ref{1})
holds with $x=x_{0},y=y_{0}$, and $h=h_{j},j=1,2$. Note that $E_{1}$ and $%
E_{2}$ are distinct since they have different centers. Now suppose that an
ellipse $\tilde{E}\neq E_{j},j=1,2$ is inscribed in $Q$ and passes through $%
P_{0}$, and suppose that $\tilde{E}$ has center ${\large (}h_{3},L(h_{3})%
{\large )},h_{3}\in I$. By Theorem \ref{T0}(see Remark \ref{R1}), $h_{3}\neq
h_{j}$ for $j=1$ or $2$. By Proposition \ref{P1}(i), the equation of $\tilde{%
E}$ is given by (\ref{1}) with $h=h_{3}$. Since $\tilde{E}$ passes through $%
P_{0}$, $p_{x_{0},y_{0}}(h_{3})=0$, which would give the quadratic $%
p_{x_{0},y_{0}}$ at least three distinct roots. Hence the only ellipses
inscribed in $Q$ which pass through $P_{0}$\ are the ones corresponding to $%
h_{1}$ and $h_{2}$, and so there are precisely two distinct ellipses
inscribed in $Q$\ which pass through $P_{0}$. To prove (ii), let $P_{0}\in
D_{1}\cup D_{2}$, but $P_{0}\neq P=\left( \dfrac{s}{s+t},\dfrac{t}{s+t}%
\right) $. Then either $x_{0}+y_{0}-1=0$ or $sy_{0}-tx_{0}=0$, but not both,
which implies that $p_{x_{0},y_{0}}\left( \dfrac{1}{2}\right) =0$ and $%
p_{x_{0},y_{0}}\left( \dfrac{s}{2}\right) >0$, or $p_{x_{0},y_{0}}\left( 
\dfrac{1}{2}\right) >0$ and $p_{x_{0},y_{0}}\left( \dfrac{s}{2}\right) =0$.
In either case, only one of the roots, say $h_{1}$, of $p_{x_{0},y_{0}}$
lies in $I$, while $h_{2}=\dfrac{1}{2}$ or $\dfrac{s}{2}$, where $h_{2}$ is
the other root of $p_{x_{0},y_{0}}$. Let $E$ be the ellipse with equation
given by (\ref{1}) with $h=h_{1}$. Arguing as above, $E$ is inscribed in $Q$
and passes through $P_{0}$. Now suppose that an ellipse $\tilde{E}\neq E$ is
inscribed in $Q$ and passes through $P_{0}$, and suppose that $\tilde{E}$
has center ${\large (}h_{3},L(h_{3}){\large )},h_{3}\in I$. By Theorem \ref%
{T0}(see Remark \ref{R1}), $h_{3}\neq h_{1}$ and $h_{3}\neq h_{2}$ since $%
h_{2}\notin I$. By Proposition \ref{P1}(i), the equation of $\tilde{E}$ is
given by (\ref{1}) with $h=h_{3}$. Since $\tilde{E}$ passes through $P_{0}$, 
$p_{x_{0},y_{0}}(h_{3})=0$, which again would give the quadratic $%
p_{x_{0},y_{0}}$ at least three distinct roots. Hence the only ellipse
inscribed in $Q$ which passes through $P_{0}$\ is the one corresponding to $%
h_{1}$, and so there is precisely one ellipse inscribed in $Q$\ which passes
through $P_{0}$. To prove (iii), let $P_{0}=P=\left( \dfrac{s}{s+t},\dfrac{t%
}{s+t}\right) $. By (\ref{endpts}), $p_{x_{0},y_{0}}$ vanishes at both
endpoints of $I$, which implies that $p_{x_{0},y_{0}}$ has no roots in $I$.
If there were an ellipse, $E$, inscribed in $Q$ and passing through $P_{0}$,
then the equation of $E$ is given by (\ref{1}) with $h=h_{3}$ for some $%
h_{3}\in I$. But then $p_{x_{0},y_{0}}(h_{3})=0$, which contradicts the fact
that $p_{x_{0},y_{0}}$ has no roots in $I$. Hence there is no ellipse
inscribed in $Q$\ which passes thru $P$. Finally, to prove (iv), suppose
that $P_{0}\in \partial \left( Q\right) $, but $P_{0}$ is not one of the
vertices of $Q$. By taking limits of points $P_{0}\in \limfunc{int}(Q)$, it
follows by (\ref{proots}) that $p_{x_{0},y_{0}}$ has two roots in $\bar{I}$,
which are not necessarily distinct(indeed we shall prove that they are not
distinct). Since $P_{0}\in \partial (Q)$, either $x_{0}=0,y_{0}=0,t\left(
x_{0}-1\right) -(s-1)y_{0}=0$, or $\left( t-1\right) x_{0}-s(y_{0}-1)=0$.
Then by (\ref{ph0}), $p_{x_{0},y_{0}}(h_{0})=0$, which implies that $%
p_{x_{0},y_{0}}$ has a double root at $h_{0}\in \bar{I}$ since $%
p_{x_{0},y_{0}}^{\prime }(h_{0})=0$ by definition. Since $P_{0}$ is not a
vertex of $Q$, $P_{0}$ cannot lie on either diagonal of $Q$. Thus by (\ref%
{endpts}), $p_{x_{0},y_{0}}\left( \dfrac{1}{2}\right) >0$ and $%
p_{x_{0},y_{0}}\left( \dfrac{s}{2}\right) >0$, which implies that $h_{0}\in
I $. Let $E$ be the ellipse with equation given by (\ref{1}) with $h=h_{0}$.
Since $p_{x_{0},y_{0}}(h_{0})=0$, $E$ passes through $P_{0}$. Also, since $%
p_{x_{0},y_{0}}^{\prime }(h_{0})=0$, it is not hard to show that $E$ must be
tangent at $P_{0}$ to a side of $Q$. One could also argue that $E$ must be
inscribed in $Q$ by Proposition \ref{P1}(i). Arguing as above using the fact
that $p_{x_{0},y_{0}}$ cannot have three or more distinct roots, there is no
ellipse $\tilde{E}\neq E$ inscribed in $Q$ and which also passes through $%
P_{0}$, which proves uniqueness.
\end{proof}

\begin{remark}
We could also have used Proposition \ref{P1}(ii) to prove Theorem \ref{T1}%
(iv).
\end{remark}

\textbf{Examples: }(1) $s=\dfrac{1}{2}$, $t=\dfrac{3}{4}$, $x_{0}=\dfrac{1}{3%
}$, $y_{0}=\dfrac{3}{4}$, and $Q$\ is the convex quadrilateral with vertices 
$(0,0),(0,1),(1,0)$, and $\allowbreak \left( \dfrac{1}{2},\dfrac{3}{4}%
\right) $. Then $I=\left( \dfrac{1}{4},\dfrac{1}{2}\right) $ and $%
P_{0}=\left( \dfrac{1}{3},\dfrac{3}{4}\right) \in \limfunc{int}%
(Q),P_{0}\notin D_{1}\cup D_{2}$. By Theorem \ref{T1}(i), there are exactly
two ellipses, $E_{1}$ and $E_{2}$, inscribed in $Q$\ and which pass through $%
P_{0}$. $p_{x_{0},y_{0}}(h)\allowbreak =\dfrac{97}{144}h^{2}-\dfrac{37}{72}h+%
\dfrac{13}{144}$, which has roots $\dfrac{37}{97}\pm \dfrac{6}{97}\sqrt{3}%
\in I$. Letting $h=\dfrac{37}{97}-\dfrac{6}{97}\sqrt{3}$ in (\ref{1}) yields
the equation of $E_{1}$: $\left( \allowbreak 29\,673-4104\sqrt{3}\right)
x^{2}+\left( 23\,632-7104\sqrt{3}\right) y^{2}+\left( 26\,808-25\,488\sqrt{3}%
\right) xy+\left( 18\,864\sqrt{3}-38\,340\right) x+\left( 19\,104\sqrt{3}%
-37\,104\right) y=\allowbreak 9792\sqrt{3}-17\,316$, and letting $h=\dfrac{37%
}{97}+\dfrac{6}{97}\sqrt{3}$ in (\ref{1}) yields the equation of

$E_{2}$: $\left( \allowbreak 29\,673+4104\sqrt{3}\right) x^{2}+\left(
23\,632+7104\sqrt{3}\right) y^{2}+\left( 26\,808+25\,488\sqrt{3}\right)
xy-\left( 18\,864\sqrt{3}+38\,340\right) x$

$-\left( 19\,104\sqrt{3}+37\,104\right) y=-\allowbreak 9792\sqrt{3}-17\,316$.

(2) $s=4$, $t=2$, $x_{0}=\dfrac{1}{2}$, $y_{0}=\dfrac{1}{4}$, and $Q$\ is
the convex quadrilateral with vertices $(0,0),(0,1),(1,0)$, and $(4,2)$.
Then $I=\left( \dfrac{1}{2},2\right) $ and $P_{0}=\left( \dfrac{1}{2},\dfrac{%
1}{4}\right) \in \limfunc{int}(Q)$. Since $sy_{0}-tx_{0}=\allowbreak 0$, $%
P_{0}\in D_{1}\cup D_{2}$, and since $P_{0}\neq P=\allowbreak \left( \dfrac{2%
}{3},\dfrac{1}{3}\right) $, by Theorem \ref{T1}(ii), there is exactly one
ellipse, $E$, inscribed in $Q$\ which passes through $P_{0}$. $%
p_{x_{0},y_{0}}(h)\allowbreak =\dfrac{57}{4}h^{2}\allowbreak -36h+15$, which
has roots $h_{1}=\dfrac{10}{19}\in I$ and $h_{2}=2\in \bar{I}-I$. Letting $h=%
\dfrac{10}{19}$ in (\ref{1}) yields the equation of $E$: $%
63\,916x^{2}+68\,400y^{2}+110\,352xy-123\,424x-\allowbreak
127\,680y+64\,960=5376$.

\section{Trapezoid}

Assume now that $Q$ is a trapezoid. It suffices, by affine invariance, to
prove Theorem \ref{T1} where $Q$\ is the trapezoid\textbf{\ }with vertices $%
(0,0),(1,0),(0,1)$, and $(1,t),0<t\neq 1$. It is useful to state some facts
about $Q$\ and related notation.

\textbullet\ The sides of $Q$ are given by $S_{1}=\overline{(0,0)\ (1,0)}%
,S_{2}=\overline{(0,0)\ (0,1)},S_{3}=\overline{(1,t)\ (1,0)}$, and $S_{4}=%
\overline{(0,1)\ (1,t)}$, and the corresponding lines which make up $%
\partial {\large (}Q{\large )}$ are given by

$L_{1}$: $y=0,L_{2}$: $x=0,L_{3}$: $x=1$, and $L_{4}$: $y=1+(t-1)x$.

\textbullet\ The diagonals of $Q$\ are $y=tx$ and $y=1-x$, and they
intersect at $\left( \dfrac{1}{1+t},\dfrac{t}{1+t}\right) $.

\textbullet\ The midpoints of the diagonals of $Q$\ are the points $%
M_{1}=\left( \dfrac{1}{2},\dfrac{1}{2}\right) $ and $M_{2}=\left( \dfrac{1}{2%
},\dfrac{1}{2}t\right) $.

\textbullet\ The open line segment joining $M_{1}$ and $M_{2}$ is $\left\{ h=%
\dfrac{1}{2},k\in I\right\} $, where $I=\left( \dfrac{1}{2},\dfrac{1}{2}%
t\right) $ if $t>1$, or $I=\left( \dfrac{1}{2}t,\dfrac{1}{2}\right) $ if $%
t<1 $.

\textbullet\ $\partial \left( Q\right) =\left\{ (x,y):0\leq x\leq
1,y=0\right\} \cup \left\{ (x,y):x=1,0\leq y\leq t\right\} $

$\cup \left\{ (x,y):0\leq x\leq 1,y=1+(t-1)x\right\} \cup \left\{
(x,y):x=0,0\leq y\leq 1\right\} $ and

$\limfunc{int}\left( Q\right) =\left\{ (x,y):0<x<1,0<y<1+(t-1)x\right\} $.

\textbullet\ Note that 
\begin{equation}
(t-1)x+1>0,0\leq x\leq 1\text{.}  \label{5}
\end{equation}

First we need the following proposition, which is the analogy of Proposition %
\ref{P1} for trapezoids.

\begin{proposition}
\label{P2}Let $Q$\ be the trapezoid\textbf{\ }with vertices $%
(0,0),(1,0),(0,1)$, and $(1,t),0<t\neq 1$.

(i) $E$ is an ellipse inscribed in $Q$\ if and only if the general equation
of $E$ is given by 
\begin{gather}
4k^{2}\left( t-1\right) ^{2}x^{2}+(t-1)^{2}y^{2}+\allowbreak 4\left(
1-t\right) \left( tk-t+k\right) xy  \notag \\
+4k\left( t-1\right) \left( 2k-t\right) x+\allowbreak 2\left( t-1\right)
\left( 2k-t\right) y  \label{4} \\
=-\left( 2k-t\right) ^{2},k\in I\text{.}  \notag
\end{gather}%
(ii) If $E$ is the ellipse given in (i), then $E$ is tangent to the four
sides of $Q$\ at the points $\zeta _{1}=\left( \dfrac{2k-t}{2k\left(
1-t\right) },0\right) \in S_{1}$, $\zeta _{2}=\left( 0,\dfrac{2k-t}{1-t}%
\right) \in S_{2}$, $\zeta _{3}=\left( 1,t\dfrac{1-2k}{1-t}\right)
\allowbreak \in S_{3}$, and $\zeta _{4}=\left( \dfrac{1-2k}{(1-t)(t+1-2k)},%
\dfrac{t}{t+1-2k}\right) \in S_{4}$.
\end{proposition}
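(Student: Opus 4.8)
The plan is to follow the proof of Proposition \ref{P1} almost verbatim, exploiting the one structural difference: here the midpoint segment $Z$ is the \emph{vertical} segment $x=\tfrac12$, $y=k\in I$, so the center of an inscribed ellipse is $\left(\tfrac12,k\right)$ and the natural parameter is the ordinate $k$ rather than an abscissa. For the sufficiency direction I would first expand the left-hand side of (\ref{4}) into the standard conic form $Ax^{2}+By^{2}+2Cxy+Dx+Ey+F=0$, reading off $A=4k^{2}(t-1)^{2}$, $B=(t-1)^{2}$, $C=2(1-t)(tk-t+k)$, $D=4k(t-1)(2k-t)$, $E=2(t-1)(2k-t)$, and $F=(2k-t)^{2}$. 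Since $t\neq1$ and $0\notin\bar I$, we have $A,B>0$, so Lemma \ref{L1} applies. A short factorization gives $AB-C^{2}=4(t-1)^{2}t(t-2k)(2k-1)$, which is positive for every $k\in I$ in both cases (when $t<1$ the two sign reversals in $(t-2k)(2k-1)$ cancel). I would then carry out the analogous, if longer, simplification of $AE^{2}+BD^{2}+4FC^{2}-2CDE-4ABF$ and check that it too is positive, so that by Lemma \ref{L1} equation (\ref{4}) defines a genuine ellipse for each $k\in I$.

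Next I would substitute the four candidate points $\zeta_{1},\dots,\zeta_{4}$ of part (ii) into the left-hand side of (\ref{4}) and verify that each value equals $-(2k-t)^{2}$, placing all four on $E$; the bounds $k\in I$ (together with $k\neq\tfrac12$, $k\neq\tfrac t2$) then force $\zeta_{j}\in S_{j}$ rather than merely on the extended line $L_{j}$. To establish tangency I would differentiate (\ref{4}) implicitly, obtaining $dy/dx$ as a quotient, evaluate it at $\zeta_{1}$ and $\zeta_{4}$, and match the outputs against the slopes $0$ of $L_{1}$ and $t-1$ of $L_{4}$, exactly as in Proposition \ref{P1}.

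The step that requires the most care, and the genuine point of departure from Proposition \ref{P1}, is that \emph{two} of the four sides are vertical, namely $L_{2}\colon x=0$ and $L_{3}\colon x=1$, whereas in the no-parallel-sides case only $L_{2}$ was. At both $\zeta_{2}$ and $\zeta_{3}$ the quotient for $dy/dx$ is therefore indeterminate, and I would treat each as the $\zeta_{2}$ case was treated for Proposition \ref{P1}: show that at $\zeta_{2}$ and at $\zeta_{3}$ the denominator of $dy/dx$ vanishes while the numerator does not (using $k\neq\tfrac12$, $k\neq\tfrac t2$, and $t\neq1$), so that the tangent is vertical and coincides with $L_{2}$, respectively $L_{3}$. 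Handling a vertical tangent at two distinct points, without mis-evaluating the indeterminate form, is where the bookkeeping is most delicate; otherwise the verification is routine. Tangency to all four sides then forces the simple closed convex curve $E$ into $Q$, so $E$ is inscribed, and this simultaneously proves part (ii).

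Finally, for the necessity direction I would argue as in Proposition \ref{P1}: if $E$ is any ellipse inscribed in $Q$, then by Theorem \ref{T0} its center lies on $Z$ and hence equals $\left(\tfrac12,k_{1}\right)$ for some $k_{1}\in I$. The ellipse produced by (\ref{4}) with $k=k_{1}$ is inscribed in $Q$ with that same center, so by the uniqueness asserted in Theorem \ref{T0} (see Remark \ref{R1}) the two coincide, and therefore the equation of $E$ must be given by (\ref{4}). This completes part (i).
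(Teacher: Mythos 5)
Your proposal is correct and follows essentially the same route as the paper's own proof: expand (\ref{4}) into standard conic form, verify the two inequalities of Lemma \ref{L1} (the second quantity factors as $16t^{2}(t-1)^{2}(2k-1)^{2}(2k-t)^{2}$, confirming the positivity you anticipated), check the four tangency points by substitution and implicit differentiation with the vertical tangents at $\zeta_{2}$ and $\zeta_{3}$ handled via the vanishing denominator and nonvanishing numerator, and obtain necessity from Theorem \ref{T0} and Remark \ref{R1}. You correctly isolated the only structural novelty (two vertical sides rather than one), and the paper treats it exactly as you propose.
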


\begin{proof}
First, suppose that $E$ is given by (\ref{4}). Then $E$ has the form $%
Ax^{2}+By^{2}+2Cxy+Dx+Ey+F=0$, where $A=4k^{2}\left( t-1\right) ^{2}$, $%
B=(t-1)^{2}$, $C=2\left( 1-t\right) \left( tk-t+k\right) $, $D=4k\left(
t-1\right) \left( 2k-t\right) $, $E=\allowbreak 2\left( t-1\right) \left(
2k-t\right) $, and $F=\left( 2k-t\right) ^{2}$. Then $AB-C^{2}=\allowbreak
4t\left( 2k-1\right) \left( t-2k\right) \left( t-1\right) ^{2}>0$ since $%
k\in I$ implies that $t<2k<1$ if $t<1$, and $1<2k<t$ if $t>1$. Also, $%
AE^{2}+BD^{2}+4FC^{2}-2CDE-4ABF=\allowbreak 16t^{2}\left( t-1\right)
^{2}\left( 2k-1\right) ^{2}\left( 2k-t\right) ^{2}>0$ since $t<2k<1$ if $t<1$%
, and $1<2k<t$ if $t>1$. By Lemma \ref{L1}, (\ref{4}) defines the equation
of an ellipse for any $k\in I$. Now let $F(x,y)=4k^{2}\left( t-1\right)
^{2}x^{2}+(t-1)^{2}y^{2}+\allowbreak 4\left( 1-t\right) \left( tk-t+k\right)
xy+4k\left( t-1\right) \left( 2k-t\right) x+\allowbreak 2\left( t-1\right)
\left( 2k-t\right) y+\allowbreak \left( 2k-t\right) ^{2}$, the left hand
side of (\ref{4}). Then $F\left( \dfrac{2k-t}{2k\left( 1-t\right) },0\right)
=\allowbreak F\left( 0,\dfrac{2k-t}{1-t}\right) =F\left( 1,t\dfrac{1-2k}{1-t}%
\right) =F\left( \dfrac{1-2k}{(1-t)(t+1-2k)},\dfrac{t}{t+1-2k}\right)
=-\left( 2k-t\right) ^{2}$, which implies that the four points $\zeta _{1}$
thru $\zeta _{4}$ lie on $E$. Differentiating both sides of the equation in (%
\ref{4}) with respect to $x$ yields $8k^{2}\left( t-1\right)
^{2}x+2(t-1)^{2}y\dfrac{dy}{dx}+\allowbreak 4\left( 1-t\right) \left(
tk-t+k\right) y+\allowbreak 4\left( 1-t\right) \left( tk-t+k\right) x\dfrac{%
dy}{dx}+4k\left( t-1\right) \left( 2k-t\right) +\allowbreak 2\left(
t-1\right) \left( 2k-t\right) \dfrac{dy}{dx}=0$, which implies that $\dfrac{%
dy}{dx}=D(x,y)=\dfrac{4k^{2}\left( t-1\right) x-\allowbreak 2\left(
tk-t+k\right) y+2k\left( 2k-t\right) }{2\left( tk-t+k\right)
x-(t-1)y-\allowbreak \left( 2k-t\right) }$. Now $D\left( \dfrac{2k-t}{%
2k\left( 1-t\right) },0\right) =\allowbreak 0=$ slope of $L_{1}$ and $%
D\left( \dfrac{1-2k}{(1-t)(t+1-2k)},\dfrac{t}{t+1-2k}\right) =\allowbreak
t-1=$ slope of $L_{4}$.

When $x=0$ and $y=\dfrac{2k-t}{1-t}$, the denominator of $D(x,y)$ equals $0$%
, but the numerator of $D(x,y)$ equals $\dfrac{2\allowbreak t\left(
2k-t\right) (2k-1)}{t-1}\neq 0$. Thus $L_{2}$ is the tangent line at $\zeta
_{2}$. When $x=1$ and $y=t\dfrac{1-2k}{1-t}$, again the denominator of $%
D(x,y)$ equals $0$, but the numerator of $D(x,y)$ equals$\dfrac{\allowbreak
2t\left( 1-2k\right) \left( 2k-t\right) }{t-1}\neq 0$.$\allowbreak $ Thus $%
L_{3}$ is the tangent line at $\zeta _{3}$. It follows easily that $\zeta
_{1}$ thru $\zeta _{4}$ lie on the line segments $S_{1}$ thru $S_{4}$,
respectively. For any simple closed convex curve, such as an ellipse,
tangent to each side of $Q$ then implies that that curve lies in $Q$. That
proves that $E$ is inscribed in $Q$. Second, suppose that $E$ is an ellipse
inscribed in $Q$. By Theorem \ref{T0},\textbf{\ }$E$ has center $\left( 
\dfrac{1}{2},k\right) $ for some $k\in I$. We have just shown that (\ref{4})
represents a family of ellipses inscribed in $Q$\ as $k$ varies over $I$,
and it is not hard to show that each ellipse given by (\ref{4}) has center $%
\left( \dfrac{1}{2},k\right) $ for some $k\in I$. Arguing exactly as in the
case above when $Q$ does not have two parallel sides, it follows that the
general equation of $E$ must be given by (\ref{4}). That proves (i). We have
also just shown that if $E$ is given by (\ref{4}), then $E$ is tangent to
the four sides of $Q$\ at the four points $\zeta _{1}$ thru $\zeta _{4}$,
which proves (ii).\qquad
\end{proof}

\begin{lemma}
\textbf{\label{L4}}Let $g(x,y)=(x-1)^{2}+(y-tx)(x-1)+txy$ and $%
h(x,y)=t(t-1)x^{2}-(t+1)xy+tx+y$. Then $g(x,y)>0$ and $h(x,y)>0$ if $%
(x,y)\in \limfunc{int}(Q)$.
\end{lemma}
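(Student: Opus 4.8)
The plan is to exploit the fact that both $g$ and $h$ are affine (degree one) in $y$ once $x$ is held fixed. Collecting the $y$--terms gives
\begin{equation*}
g(x,y)=\bigl((t+1)x-1\bigr)y+(1-t)x^{2}+(t-2)x+1,\quad h(x,y)=\bigl(1-(t+1)x\bigr)y+t(t-1)x^{2}+tx,
\end{equation*}
so for each fixed $x\in(0,1)$ each of $g,h$ is a \emph{linear} function of $y$ on the vertical segment $0<y<1+(t-1)x$ that slices through $\limfunc{int}(Q)$. Because a linear function on an interval equals a convex combination of its two endpoint values, and because the top endpoint $1+(t-1)x$ is positive by (\ref{5}), it suffices to prove that $g$ and $h$ are positive on the bottom edge $y=0$ and on the top edge $y=1+(t-1)x$ for every $x\in(0,1)$. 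Positivity throughout $\limfunc{int}(Q)$ then follows at once. This affine--in--$y$ observation is what lets us avoid the interior critical--point analysis that was needed for $f$ in Lemma \ref{L3}.

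Next I would restrict each function to the two edges and factor. Using the original forms, on the top edge one has $y-tx=1-x$, so $(x-1)^{2}+(y-tx)(x-1)=0$ and $g$ collapses nicely; the four boundary restrictions are
\begin{equation*}
g(x,0)=(x-1)\bigl((1-t)x-1\bigr),\quad g\bigl(x,1+(t-1)x\bigr)=tx\bigl(1+(t-1)x\bigr),
\end{equation*}
\begin{equation*}
h(x,0)=tx\bigl((t-1)x+1\bigr),\quad h\bigl(x,1+(t-1)x\bigr)=(1-t)x^{2}+(t-2)x+1=(x-1)\bigl((1-t)x-1\bigr).
\end{equation*}
The last identity shows $h$ on the top edge coincides with $g$ on the bottom edge, so only three sign computations are genuinely needed. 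The expressions $g(x,1+(t-1)x)$ and $h(x,0)$ are each a product of $tx>0$ with a factor that is positive by (\ref{5}), hence positive. For $g(x,0)$ (and therefore also $h$ on the top edge), both factors $(x-1)$ and $(1-t)x-1$ are negative for $x\in(0,1)$, so the product is positive.

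The only point requiring care, and the closest thing to an obstacle, is verifying that $(1-t)x-1<0$ uniformly over $x\in(0,1)$ in both regimes $t>1$ and $0<t<1$: when $t\geq 1$ the factor $(1-t)x\leq 0$, while when $0<t<1$ one has $(1-t)x<1-t<1$; either way $(1-t)x-1<0$, so the single factorization handles both cases simultaneously. With these elementary sign checks in place, $g$ and $h$ are positive on both bounding edges for every $x\in(0,1)$, and the linearity reduction of the first paragraph delivers $g(x,y)>0$ and $h(x,y)>0$ throughout $\limfunc{int}(Q)$.
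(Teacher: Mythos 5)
Your proof is correct and follows essentially the same route as the paper: the paper also exploits that these functions are affine in $y$ on each vertical slice $0<y<1+(t-1)x$, proving $h>0$ by exactly your endpoint evaluation $L(0)=tx\bigl((t-1)x+1\bigr)>0$, $L\bigl(1+(t-1)x\bigr)=(1-x)\bigl((t-1)x+1\bigr)>0$, and $g>0$ by the equivalent one-line estimate obtained from substituting the upper bound for $y$ in the term $(y-tx)(x-1)$. Your observation that $h$ restricted to the top edge coincides with $g(x,0)$ is a pleasant economy not in the paper, but it does not change the method.
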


\begin{proof}
Suppose that $(x,y)\in \limfunc{int}(Q)$. Then $0<y<1+(t-1)x$, which implies
that $g(x,y)>(x-1)^{2}+{\large (}1+(t-1)x-tx{\large )}(x-1)+tx=\allowbreak
tx>0$.

Now express $h$ as a linear function of $y,L(y)$, for fixed $x$ and $t$: $%
L(y)={\large (}1-(t+1)x{\large )}y+t(t-1)x^{2}+tx$. Then by (\ref{5}), $%
L(0)=t(t-1)x^{2}+tx=tx{\large (}(t-1)x+1{\large )}>0$ and $%
L(1+(t-1)x)=\allowbreak \left( 1-x\right) {\large (}(t-1)x+1{\large )}>0$,
which implies that $L(y)>0$ for all $0<y<1+(t-1)x$.

\textbf{(of Theorem \ref{T1} for trapezoids)}For fixed $x$ and $y$, one can
write (\ref{4}) in the form $p_{x,y}(k)=0$, where $p_{x,y}$ is the
polynomial in $k$ given by

\begin{gather}
p_{x,y}(k)=4\left( xt-x+1\right) ^{2}k^{2}  \notag \\
+4(\allowbreak tx-yxt^{2}-xt^{2}-y+yt-t+yx)k  \label{pk} \\
+(t+y)^{2}+ty\left( yt-2t-2y\right) +4txy\left( t-1\right) \text{.}  \notag
\end{gather}

$p_{x,y}$ depends on $t$ as well, but we suppress that dependence in our
notation. By Proposition \ref{P2}(i), the equation of any ellipse, $E$,
inscribed in $Q$\ is given by $p_{x,y}(k)=0,k\in I$. Hence, for given $(x,y)$%
, the number of times an ellipse $E$ with equation given by (\ref{4}) passes
through $(x,y)$ equals the number of distinct roots of $p_{x,y}(k)=0$ in $I$%
. Evaluating $p_{x,y}$ at the endpoints of $I$ yields 
\begin{eqnarray}
p_{x,y}\left( \dfrac{1}{2}\right) &=&\allowbreak \left( y+x-1\right)
^{2}\left( t-1\right) ^{2},  \label{6} \\
p_{x,y}\left( \dfrac{t}{2}\right) &=&\allowbreak \left( t-1\right)
^{2}\left( tx-y\right) ^{2}\text{.}  \notag
\end{eqnarray}%
Now $p_{x,y}^{\prime }(k_{0})=\allowbreak 0$, where 
\begin{equation}
k_{0}=\dfrac{(t^{2}-1)xy+t(t-1)x-(t-1)y+t}{2{\large (}(t-1)x+1{\large )}^{2}}%
\text{.}  \label{k0}
\end{equation}%
We now assume throughout that $t>1$ and thus $I=\left( \dfrac{1}{2},\dfrac{1%
}{2}t\right) $. The case $t<1$ follows similarly and we omit the details. To
prove (i), let $P_{0}\in \limfunc{int}(Q),P_{0}\notin D_{1}\cup D_{2}$. A
simple computation yields \ 
\begin{equation}
p_{x_{0},y_{0}}(k_{0})=\dfrac{4t\left( t-1\right) ^{2}x_{0}\left(
x_{0}-1\right) y_{0}{\large (}(t-1)x_{0}-y_{0}+1{\large )}}{{\large (}%
(t-1)x_{0}+1{\large )}^{2}}\text{.}  \label{pk0}
\end{equation}%
Note that the denominator in (\ref{pk0}) is nonzero by (\ref{5}). Now $%
(x_{0},y_{0})\in \limfunc{int}(Q)$ implies that $(t-1)x_{0}-y_{0}+1>0$ and $%
x_{0}-1<0$. Hence $p_{x_{0},y_{0}}(k_{0})<0$. Unlike the case above when $Q$
does not have two parallel sides, here we shall prove directly that $\dfrac{1%
}{2}<k_{0}<\dfrac{t}{2}$. First, $k_{0}-\dfrac{1}{2}=\dfrac{%
(t^{2}-1)x_{0}y_{0}+t(t-1)x_{0}-(t-1)y_{0}+t}{2{\large (}(t-1)x_{0}+1{\large %
)}^{2}}-\dfrac{1}{2}=\allowbreak \allowbreak $

$\left( t-1\right) \dfrac{(x_{0}-1)^{2}+(x_{0}-1)y_{0}-\allowbreak
tx_{0}\left( x_{0}-y_{0}-1\right) }{2{\large (}(t-1)x_{0}+1{\large )}^{2}}%
=\left( t-1\right) g(x_{0},y_{0})>0$ by Lemma \ref{L4}. Second, $\dfrac{t}{2}%
-k_{0}=\dfrac{t}{2}-\dfrac{(t^{2}-1)x_{0}y_{0}+t(t-1)x_{0}-(t-1)y_{0}+t}{2%
{\large (}(t-1)x_{0}+1{\large )}^{2}}=$

$\left( t-1\right) \dfrac{t(t-1)x_{0}^{2}-(t+1)x_{0}y_{0}+tx_{0}+y_{0}}{2%
{\large (}(t-1)x_{0}+1{\large )}^{2}}=\left( t-1\right) h(x_{0},y_{0})>0$,
again by Lemma \ref{L4}. That proves that $\dfrac{1}{2}<k_{0}<\dfrac{t}{2}$
when $P_{0}\in \limfunc{int}(Q)$. Also, $P_{0}\notin D_{1}\cup D_{2}$
implies that $y_{0}+x_{0}-1\neq 0\neq tx_{0}-y_{0}$, which implies, by (\ref%
{6}), that $p_{x_{0},y_{0}}\left( \dfrac{1}{2}\right) >0$ and $%
p_{x_{0},y_{0}}\left( \dfrac{t}{2}\right) >0$. Thus $p_{x_{0},y_{0}}$ must
have two distinct roots in $I$. Arguing as we did for the case when no two
sides of $Q$ are parallel, it follows that precisely two distinct ellipses
inscribed in $Q$\ pass through $P_{0}$. To prove (ii), let $P_{0}\in
D_{1}\cup D_{2}$, but $P_{0}\neq P=\left( \dfrac{1}{1+t},\dfrac{t}{1+t}%
\right) $. Then either $x_{0}+y_{0}-1=0$ or $tx_{0}-y_{0}=0$, but not both,
which implies, by (\ref{6}), that $p_{x_{0},y_{0}}\left( \dfrac{1}{2}\right)
=0$ and $p_{x_{0},y_{0}}\left( \dfrac{t}{2}\right) >0$, or $%
p_{x_{0},y_{0}}\left( \dfrac{1}{2}\right) >0$ and $p_{x_{0},y_{0}}\left( 
\dfrac{t}{2}\right) =0$. In either case $p_{x_{0},y_{0}}$ has exactly one
real root in $I$ since $p_{x_{0},y_{0}}(k_{0})<0$. Again, arguing as we did
for the case when no two sides of $Q$ are parallel, it follows that
precisely one distinct ellipse inscribed in $Q$\ passes through $P_{0}$. To
prove (iii), let $P_{0}=P=\left( \dfrac{1}{1+t},\dfrac{t}{1+t}\right) $.
Then $x_{0}+y_{0}-1=tx_{0}-y_{0}=0$, which implies, by (\ref{6}), that $%
p_{x_{0},y_{0}}\left( \dfrac{1}{2}\right) =p_{x_{0},y_{0}}\left( \dfrac{t}{2}%
\right) =0$ and hence $p_{x_{0},y_{0}}$ has no roots in $I$. As above, there
is no ellipse inscribed in $Q$\ which passes thru $P_{0}$. Finally we prove
(iv). Suppose that $P_{0}\in \partial \left( Q\right) $, but $P_{0}$ is not
one of the vertices of $Q$. Since $P_{0}\in \partial (Q)$, either $%
x_{0}=0,x_{0}=1,y_{0}=0$, or $(t-1)x_{0}-y_{0}+1=0$. Then by (\ref{pk0}), $%
p_{x_{0},y_{0}}(k_{0})=0$, which implies that $p_{x_{0},y_{0}}$ has a double
root at $k_{0}\in \bar{I}$ since $p_{x_{0},y_{0}}^{\prime }(k_{0})=0$ by
definition. Since $P_{0}$ is not a vertex of $Q$, $P_{0}$ cannot lie on
either diagonal of $Q$. Thus by (\ref{6}), $p_{x_{0},y_{0}}\left( \dfrac{1}{2%
}\right) >0$ and $p_{x_{0},y_{0}}\left( \dfrac{t}{2}\right) >0$, which
implies that $k_{0}\in I$. Let $E$ be the ellipse with equation given by (%
\ref{4}) with $k=k_{0}$. Since $p_{x_{0},y_{0}}(k_{0})=0$, $E$ passes
through $P_{0}$. Also, since $p_{x_{0},y_{0}}^{\prime }(k_{0})=0$, it is not
hard to show that $E$ must be tangent at $P_{0}$ to a side of $Q$. One could
also argue that $E$ must be inscribed in $Q$ by Proposition \ref{P2}(i).
Using the fact that $p_{x_{0},y_{0}}$ cannot have three or more distinct
roots, it follows easily that there is no ellipse $\tilde{E}\neq E$
inscribed in $Q$ and which also passes through $P_{0}$, which proves
uniqueness. That completes the proof of Theorem \ref{T1} when $Q$ is a
trapezoid.
\end{proof}

\textbf{Example: }Consider the trapezoid, $\tilde{Q}$, with vertices $%
(-1,2),(3,4),(3,-1)$, and $(9,2)$. We want to find an an ellipse inscribed
in $\tilde{Q}$ which passes through the point $(4,2)\limfunc{int}(\tilde{Q})$%
. First define the affine transformation

$A(x,y)=\left( \dfrac{1}{10}x-\dfrac{1}{5}y+\dfrac{1}{2},\dfrac{3}{20}x+%
\dfrac{1}{5}y-\dfrac{1}{4}\right) $. Then $A(-1,2)=\allowbreak \left(
0,0\right) $, $A(3,4)=\allowbreak \left( 0,1\right) $, $A(3,-1)=\allowbreak
\left( 1,0\right) $, and $A(9,2)=\allowbreak \left( 1,\dfrac{3}{2}\right) $,
so that $A$ maps $\tilde{Q}$ onto the trapezoid, $Q$, with vertices $%
(0,0),(1,0),(0,1)$, and $\left( 1,\dfrac{3}{2}\right) $. Also, $%
A(4,2)=\allowbreak \left( \dfrac{1}{2},\dfrac{3}{4}\right) =P_{0}\in 
\limfunc{int}(Q)$. Now $y_{0}=tx_{0}$, where $x_{0}=\dfrac{1}{2}$, $y_{0}=%
\dfrac{3}{4}$, and $t=\dfrac{3}{2}$, which implies that $P_{0}$ lies on a
diagonal of $Q$, but $P_{0}\neq P=\left( \dfrac{2}{5},\dfrac{3}{5}\right) $.
By Theorem \ref{T1}(ii), there is exactly one ellipse, $E$, inscribed in $Q$%
\ which passes through $P_{0}$. The corresponding polynomial in $%
k,p_{x_{0},y_{0}}$(see (\ref{pk}) above), is given by $p_{x_{0},y_{0}}(k)=%
\dfrac{1}{64}\left( 100k-51\right) \left( 4k-3\right) $, which has one root
in $I$, $k_{1}=\dfrac{51}{100}$. Letting $k=\dfrac{51}{100}$ in (\ref{4})
above yields the equation of $E$: $2601x^{2}+2500y^{2}+4500xy-4896x-4800%
\allowbreak y=-2304$. Finally, substituting $\dfrac{1}{10}x-\dfrac{1}{5}y+%
\dfrac{1}{2}$ for $x$ and $\dfrac{3}{20}x+\dfrac{1}{5}y-\dfrac{1}{4}$for $y$
in the equation of $E$, and simplifying yields the following equation of the
ellipse inscribed in $\tilde{Q}$ which passes through the point $(4,2)$: $%
\allowbreak 3744x^{2}+601y^{2}+24xy-22\,800x-1900y=-\allowbreak 32\,500$.

\section{Parallelogram}

Finally, assume that $Q$ is a parallelogram. Since any parallelogram is
affine equivalent to the unit square, it suffices to assume that $Q$ is the
unit square. In [\cite{H4}] it was shown that if $Z$\ is the rectangle with
vertices $(0,0),(l,0),(0,k)$, and $(l,k)$, where $l,k>0$, then the general
equation of an ellipse inscribed in $Z$\ is given by $%
k^{2}x^{2}+l^{2}y^{2}-2l\left( k-2v\right)
xy-2lkvx-2l^{2}vy+l^{2}v^{2}=0,0<v<k$. It then follows immediately(or one
can easily prove this directly) that the general equation of an ellipse
inscribed in the unit square is given by $x^{2}+y^{2}+2\left( 2v-1\right)
xy-2vx-2vy+v^{2}=0,0<v<1$. The rest of the proof of Theorem \ref{T1} is
similar to the proof for the trapezoid case and we omit the details.

\section{Algorithms}

Throughout, $A$ denotes a non--singular affine map of the $xy$ plane to
itself.

\textbullet\ Given a point, $\tilde{P}_{0}$, in the interior of a
quadrilateral, $\tilde{Q}$, find all ellipses inscribed in $\tilde{Q}$ which
pass through $\tilde{P}_{0}$.

\textbf{Case 1:} $\tilde{Q}$ does not have two parallel sides.

\textbf{Step 1:} Find an $A$ which maps $\tilde{Q}$ to the convex
quadrilateral, $Q$, with vertices $(0,0),(0,1),(1,0)$, and $(s,t)$, where $%
s>0,t>0,s+t>1,s\neq 1\neq t$. Let $I=\left( \dfrac{1}{2},\dfrac{1}{2}%
s\right) $ if $s>1$, $I=\left( \dfrac{1}{2}s,\dfrac{1}{2}\right) $ if $s<1$. 
$A$ sends $\tilde{P}_{0}$ to the point $P_{0}=(x_{0},y_{0})$ lying in the
interior of $Q$.

\textbf{Step 2: }Find the roots of the quadratic polynomial in $h$ given in (%
\ref{p}), with $x=x_{0}$ and $y=y_{0}$. If $P_{0}$ does not lie on either
diagonal of $Q$, then $p_{x_{0},y_{0}}$ has two roots in $I$, $h_{1}$ and $%
h_{2}$. There are then two distinct ellipses, $E_{1}$ and $E_{2}$, inscribed
in $Q$ which pass through $P_{0}$. The equations of $E_{1}$ and $E_{2}$ are
given by (\ref{1}) with $h=h_{j},j=1,2$.

If $P_{0}$ lies on one of the diagonals of $Q$, but $P_{0}$ does not equal
the intersection point of the diagonals, then $p_{x_{0},y_{0}}$ has one root
in $I$, $h_{1}$. There is then one ellipse, $E$, inscribed in $Q$ which
passes through $P_{0}$. The equation of $E$ is given by (\ref{1}) with $%
h=h_{1}$.

In each case above, use the map $A$ to obtain the corresponding equation of
each ellipse inscribed in $\tilde{Q}$ which passes through $\tilde{P}_{0}$.

Finally, if $\tilde{P}_{0}$ equals the intersection point of the diagonals
of $\tilde{Q}$, then there is no ellipse inscribed in $\tilde{Q}$ which
passes through $\tilde{P}_{0}$.

\textbf{Case 2:} $\tilde{Q}$ is a trapezoid.

\textbf{Step 1:} Find an $A$ which maps $\tilde{Q}$ to the trapezoid, $Q$,%
\textbf{\ }with vertices $(0,0),(1,0),(0,1)$, and $(1,t),0<t\neq 1$. Let $%
I=\left( \dfrac{1}{2},\dfrac{1}{2}t\right) $ if $t>1$, or $I=\left( \dfrac{1%
}{2}t,\dfrac{1}{2}\right) $ if $t<1$. $A$ sends $\tilde{P}_{0}$ to the point 
$P_{0}=(x_{0},y_{0})$ lying in the interior of $Q$.

\textbf{Step 2: }Find the roots of the quadratic polynomial in $k$ given in (%
\ref{pk0}), with $x=x_{0}$ and $y=y_{0}$.

As with case 1, if $P_{0}$ does not lie on either diagonal of $Q$, then $%
p_{x_{0},y_{0}}$ has two roots in $I$, $k_{1}$ and $k_{2}$. There are then
two distinct ellipses, $E_{1}$ and $E_{2}$, inscribed in $Q$ which pass
through $P_{0}$. The equations of $E_{1}$ and $E_{2}$ are given by (\ref{4})
with $k=k_{j},j=1,2$.

If $P_{0}$ lies on one of the diagonals of $Q$, but $P_{0}$ does not equal
the intersection point of the diagonals, then $p_{x_{0},y_{0}}$ has one root
in $I$, $k_{1}$. There is then one ellipse, $E$, inscribed in $Q$ which
passes through $P_{0}$. The equation of $E$ is given by (\ref{4}) with $%
k=k_{1}$.

In each case above, use the map $A$ to obtain the corresponding equation of
each ellipse inscribed in $\tilde{Q}$ which passes through $\tilde{P}_{0}$.

Finally, if $\tilde{P}_{0}$ equals the intersection point of the diagonals
of $\tilde{Q}$, then there is no ellipse inscribed in $\tilde{Q}$ which
passes through $\tilde{P}_{0}$.

\textbf{Case 3:} $\tilde{Q}$ is a parallelogram.

The details here are similar to the two cases above. One can first use an
affine map to send $\tilde{Q}$ to the unit square.

\end{document}